\def\r{\mathbb R}
\newtheorem{theorem}{Theorem}[section]
 \newtheorem{proposition}[theorem]{Proposition}
 \newtheorem{corollary}[theorem]{Corollary}
\theoremstyle{definition}
\newtheorem{definition}[theorem]{Definition}
\newtheorem{remark}[theorem]{Remark}
\begin{document}

\title{Rotational $K^\alpha$-Translators in Minkowski space}

\author{Muhittin Evren Aydin}
\address{Department of Mathematics. Faculty of Science, Firat University, Elazig,  23200 Turkey}
\email{meaydin@firat.edu.tr}
\author{ Rafael L\'opez}
 \address{Departamento de Geometr\'{\i}a y Topolog\'{\i}a\\  Universidad de Granada\\
 18071 Granada, Spain}
 \email{rcamino@ugr.es}

\keywords{ Minkowski space; $K^\alpha$-translator;surfaces of revolution.}
\subjclass{ 53C44, 53A15, 35J96}
\begin{abstract}
A spacelike surface in Minkowski space $\r_1^3$ is called a $K^\alpha$-translator of the flow by the powers of Gauss curvature if satisfies $K^\alpha= \langle N,\vec{v}\rangle$, $\alpha \neq 0$, where $K$ is the Gauss curvature, $N$ is the unit normal vector field and $\vec{v}$ is a  direction of $\r_1^3$. In this paper, we classify all rotational $K^\alpha$-translators. This classification will depend on the causal character of the rotation axis.   Although the theory of the $K^\alpha$-flow holds for spacelike surfaces, the equation describing $K^\alpha$-translators is still valid for timelike surfaces. So we also investigate the    timelike rotational surfaces that satisfy the same prescribing Gauss curvature equation. 
\end{abstract}
\maketitle

\section{Introduction and formulation of the problems}	

In this paper we consider the translating solitons of the flow by powers of the Gauss curvature $K$ in the Minkowski space $\r_1^3$. Let $X:\Sigma\to\r_1^3$ be a smooth immersion of a spacelike surface $\Sigma$ with positive Gauss curvature $K$. The $K^\alpha$-flow is a one-parameter family of smooth spacelike immersions $X_t=X(\cdot,t)\colon\Sigma\to \r_1^3$, $t\in [0,T)$ such that $X_0=X$ and satisfying   
$$\frac{\partial}{\partial t}X(p,t)=-K(p,t)^\alpha N(p,t),\quad (p,t)\in \Sigma\times [0,T),$$
where $\alpha\not=0$ is a constant, $N(p,t)$ is the unit normal of $X(p,t)$ and $K(p,t)$ is the Gauss curvature at $X(p,t)$.  Our interest are those surfaces $\Sigma$ that move under the flow along a direction $\vec{v}$ of $\r_1^3$. 

\begin{definition} \label{def-1}
Let $\vec{v}\in\r_1^3$. A spacelike surface $\Sigma$ is called a translator by the $K^\alpha$-flow and speed $\vec{v}$ if
\begin{equation}\label{k1}
K^\alpha= \langle N,\vec{v}\rangle.
\end{equation}
\end{definition}
Here we understand that $K>0$ discarding the trivial case  $K=0$.  We also observe that the left-hand side of \eqref{k1} is positive. This implies that if we reverse the orientation $N$, which does not affect to the term $K^\alpha$, then the speed $\vec{v}$ must change of sign. The flow by powers of the Gauss curvature in Euclidean space was initiated by Andrews, Chow and Urbas \cite{an,ch,ur,ur1,ur3}.   In contrast, its study in Minkowski space has received less attention  \cite{an2,gm}. We point out \cite{ju} where has been investigated the Dirichlet problem associated to \eqref{k1}. 
 
In this paper we will classify all $K^\alpha$-translators of $\r_1^3$ of rotational type. A first purpose is  to enrich the theory of the flow by powers of the Gauss curvature in a natural class of surfaces as the surfaces of revolution. Also rotational $K^\alpha$-translators  can be employed as barriers in order to obtain $C^1$ and $C^2$ apriori estimates in the Dirichlet problem associated to the equation \eqref{k1}, such as it occurs  for the prescribing Gauss curvature equation $K=\mbox{ct}$ \cite{bs,bjs,hjs}. On the other hand, the family of surfaces of revolution of $\r_1^3$ is greater than of  Euclidean case because in the Lorentzian ambient space we have three types of surfaces depending on the causal character of the rotation axis. Together this,  the speed $\vec{v}$ has again   three possible causal choices.  

Finally, one more, and no less interesting, observation. If we now leave in mind the  flow theory, it is natural to ask about the solutions of \eqref{k1}   in the case that the surfaces are timelike. Let us point out that  all terms that appear in \eqref{k1} are still valid and  \eqref{k1} can be viewed as a prescribing curvature equation.  Overusing of the language, and for simplicity in the statements, we will say that a timelike surface satisfying \eqref{k1} is a \textit{timelike $K^\alpha$-translator}, leaving $K^\alpha$-translator in case that the surface is spacelike. 

This paper is organized as follows. In Section \ref{sec2} we recall some basics of local differential geometry of surfaces in $\r_1^3$. In separated cases according to the type of surface of revolution, in Section \ref{spac} we classify the rotational $K^{\alpha}$-translators  (Theorems \ref{clas-1}, \ref{clas-2}, \ref{clas-3}).  As a first step, we prove that the speed $\vec{v}$ is not arbitrary: although initially there is not a priori relation between $\vec{v}$ and the direction of the rotation axis of the surface, we will prove that the rotation axis must be parallel to the speed $\vec{v}$  (Propositions \ref{p-tim1}, \ref{p-spa1}, \ref{p-light}). In Section \ref{time}, we do an analogous study for the rotational timelike $K^\alpha$-translators.  

As a consequence of our results, we prove  the non-existence of rotational $K^{\alpha}$-translators orthogonally intersecting the rotation axis when the rotation axis is timelike (Corollary \ref{exist-1}) or spacelike (Corollary \ref{exist-12}). This contrasts with the result in Euclidean setting where, for any $\alpha$, such translators exist (\cite{al}). In spite of this, as long as $\alpha \in (0,1/2)$, there are  rotational $K^{\alpha}$-translators that intersect the rotation axis at a conical point (Corollaries \ref{exist-1}, \ref{exist-2}). A last observation is that in some situations, we will have a $K^\alpha$-translator that can be extended by means of a timelike $K^\alpha$-translator with the same rotation axis. This is because the maximal domain of the function $f(r)$ that determines the generating curve is a bounded interval where at the endpoint  the surface leaves to be spacelike. We will prove that under this circumstance, it is possible to extend the surface beyond of the domain of $f$ with  a rotational  timelike  $K^\alpha$-translator with the same axis.

\section{Preliminaries} \label{sec2}
The Lorentz-Minkowski space $\r_1^3$ is the real $3$-dimensional vector space $\r^3$ endowed with the  metric  $\langle,\rangle=dx^2+dy^2-dz^2$, where
  $(x, y, z)$ are the canonical coordinates in $\mathbb{R}^3$.    Let $\Sigma$ be a surface isometrically immersed in $\r_1^3$ whose induced metric is non-degenerate. If the metric, denoted by $\langle,\rangle$ again, is Riemannian (resp. Lorentzian), the surface is said to be {\it spacelike} (resp. {\it timelike}).  
Denote  by $\nabla^0$ and $\nabla$ the Levi-Civita connection of $\r_1^3$ and the induced connection on $\Sigma$. The second fundamental form $\sigma$ is defined by  the Gauss formula
\begin{equation}\label{3-gf}
\nabla_U^0 V=\nabla_U V+\sigma(U,V),
\end{equation}
for any tangent vector fields $U$ and $V$ on $\Sigma$.  Let $N$ be a (local) unit normal vector field on $\Sigma$. We know that if the immersion is spacelike, the surface is always orientable and that  $\langle N,N\rangle=-1$. If $\Sigma$ is timelike, then $N$ is spacelike and it is only a local unit normal vector field.   To simplify the next notation, let $\langle N,N\rangle=\epsilon\in\{-1,1\}$.     Because $\langle N,N\rangle$ is constant, we have $\langle\nabla_U^0N,N\rangle=0$. Then $\nabla^0_UN$ is tangent to $\Sigma$ and the Weingarten endomorphism $A$ is defined by
$-\nabla^0_UN=A U$. We have
\[
\sigma(U,V)=\epsilon\, \langle\sigma(U,V),N\rangle N=\epsilon\, \langle AU,V\rangle N.
\]
Now \eqref{3-gf} writes as
$$\nabla^0_UV=\nabla_UV+\epsilon\,\langle AU,V\rangle N.$$

The Weingarten map $A$ is self-adjoint  with respect to the induced metric $\langle,\rangle$. In particular,  if $\Sigma$ is spacelike then $A$ is real diagonalizable because   the metric is Riemannian.  The     principal curvatures $\lambda_i$ are the eigenvalues of $A$. If $\Sigma$ is timelike, then $A$ may be or not (real) diagonalizable. We define the (intrinsic) Gauss curvature $K$ of the surface as
\begin{equation*}
K=\epsilon\,\mbox{ det}(A). 
\end{equation*}
If $A$ is diagonalizable, then   $K=\epsilon\, \lambda_1 \lambda_2$. We need the expression of $K$ in local coordinates. 
For a parametrization $X=X(u,v)$ of a (spacelike or timelike) surface, we have
$$K=-\frac{\mbox{det}(X_u,X_v,X_{uu})\mbox{det}(X_u,X_v,X_{vv})-
\mbox{det}(X_u,X_v,X_{uv})^2}{(EG-F^2)^2},$$
where, as usually, 
$$E=\langle X_u,X_u\rangle,\,  F=\langle X_u,X_v\rangle, \, G=\langle X_v,X_v\rangle$$
  are the coefficients of the first fundamental form. Recall that $EG-F^2>0$ if $\Sigma$ is spacelike and $EG-F^2<0$ if $\Sigma$ is timelike.

We now describe the parametrizations of the surfaces of revolution of $\r_1^3$.  In $\r_1^3$ there are three types of surfaces of revolution   according to  the causal character of the rotation axis $L$. Let $B=\{e_1, e_2, e_3\}=\{(1,0,0),(0,1,0),(0,0,1)\}$ be the usual basis of $\r^3$. After a rigid motion, we can assume that $L=\mbox{sp}\{e_3\}$ (timelike), $L=\mbox{sp}\{e_1\}$ (spacelike) or $L=\mbox{sp}\{e_2+e_3\}$ (lightlike).
\begin{enumerate}
\item The axis is timelike, $L=\mbox{sp}\{e_3\}$. The generating curve is $r\mapsto (r,0,f(r))$, $r>0$, where $f$ is a smooth function and
 the parametrization of the surface is
\begin{equation}\label{p1}
X(r,\theta)=( r\cos\theta,r\sin\theta,f(r)).
\end{equation}
Here $EG-F^2=r^2(1-f'^2)$ so the surface is spacelike (resp. timelike) if $f'^2<1$ (resp. $f'^2>1$).
\item The axis is spacelike, $L=\mbox{sp}\{e_1\}$.    The generating curve can be included in the $xz$-plane or in the $xy$-plane. In the first case, if the curve is $r\mapsto (f(r),0,r)$ $r>0$,  the parametrization of the surface is  
\begin{equation}\label{p21}
X(r,\theta)= (f(r),r\sinh \theta,r\cosh \theta).
\end{equation}
Now $EG-F^2=r^2(f'^2-1)$ and the surface is spacelike (resp. timelike) if $f'^2>1$ (resp. $f'^2<1$).
In the second case, if $r\mapsto (f(r),r,0)$, $r>0$, then 
\begin{equation}\label{p22}
X(r,\theta)= (f(r),r\cosh \theta,r\sinh \theta).
\end{equation}
This surface is always timelike because $EG-F^2=-r^2(1+f'^2)<0$.
\item The axis is lightlike, $L=\mbox{sp}\{e_2+e_3\}$.   The generating curve is $r\mapsto (0,f(r)+r,f(r)-r)$, $r>0$,  and the surface is
\begin{equation}\label{p3}
X(r,t)=(2r t, f(r)+r-rt^2,f(r)-r-rt^2).
\end{equation}
We have $EG-F^2=16r^2f'$, so the surface is spacelike (resp. timelike) if $f'>0$ (resp. $f'<0$).
 \end{enumerate}

\section{$K^\alpha$-translators of rotational type} \label{spac}
 In this section, we investigate the $K^\alpha$-translators which are  surfaces of revolution. Since there are three types of such surfaces, our investigation will be into three separated subsections where the rotation axis is timelike, spacelike and lightlike, respectively.
 
 \subsection{The axis is timelike }  \label{ss1}
First we prove that the speed   $\vec{v}$ must be parallel to the rotation axis.

\begin{proposition} \label{p-tim1}
If $\Sigma $ is a rotational $K^{\alpha }$-translator with   timelike axis, then $\vec{v}$ is parallel to its
rotation axis.
\end{proposition}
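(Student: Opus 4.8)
The plan is to compute $\langle N,\vec{v}\rangle$ explicitly for the parametrization \eqref{p1} and compare it with $K^\alpha$, which depends only on $r$. Write $\vec{v}=(a_1,a_2,a_3)$ in the canonical basis. For the rotational parametrization $X(r,\theta)=(r\cos\theta,r\sin\theta,f(r))$, a unit normal is
\[
N=\frac{1}{\sqrt{1-f'^2}}\bigl(f'\cos\theta,f'\sin\theta,1\bigr),
\]
(up to sign), where we assume $1-f'^2>0$ since $\Sigma$ is spacelike. The Gauss curvature $K$ of a rotational spacelike surface with timelike axis is a function of $r$ alone; hence the left-hand side $K^\alpha$ of \eqref{k1} is independent of $\theta$.

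Next I would expand the right-hand side:
\[
\langle N,\vec{v}\rangle=\frac{1}{\sqrt{1-f'^2}}\bigl(a_1 f'\cos\theta+a_2 f'\sin\theta-a_3\bigr).
\]
Since $K^\alpha$ does not depend on $\theta$, the coefficient of $\cos\theta$ and of $\sin\theta$ in this expression must vanish identically in $r$ and $\theta$. The term $-a_3/\sqrt{1-f'^2}$ is $\theta$-independent, so the obstruction is the factor $(a_1\cos\theta+a_2\sin\theta)f'(r)$. Because $\cos\theta$ and $\sin\theta$ are linearly independent functions of $\theta$, and because $f'$ cannot vanish on an open set (otherwise the surface is a piece of a spacelike plane, where $K=0$, contradicting $K>0$), we conclude $a_1=a_2=0$. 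Therefore $\vec{v}=a_3 e_3$ is parallel to the rotation axis $L=\mathrm{sp}\{e_3\}$; moreover $a_3\neq 0$ since $K^\alpha>0$ forces $\langle N,\vec v\rangle\not\equiv 0$.

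The main subtlety, and the only place requiring care, is justifying that $f'$ does not vanish identically on any subinterval: one must invoke $K>0$ (which holds by hypothesis, the trivial case $K=0$ being discarded in Definition \ref{def-1}) together with the formula for $K$ of a surface of revolution to rule out $f'\equiv 0$ there. Once that is settled, the argument is a direct matching of $\theta$-frequencies on both sides of \eqref{k1}. I expect no serious obstacle beyond bookkeeping the sign of $N$ and the precise expression for $K$ in terms of $f$.
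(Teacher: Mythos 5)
Your proposal is correct and follows essentially the same route as the paper: compute $N$ and $K$ for the parametrization \eqref{p1}, expand $\langle N,\vec v\rangle$, use the linear independence of $\{1,\cos\theta,\sin\theta\}$ to force the coefficients of $\cos\theta$ and $\sin\theta$ to vanish, and rule out $f'\equiv 0$ via $K>0$. The paper's proof is the same frequency-matching argument, so there is nothing further to add.
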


\begin{proof}
By applying a rigid motion of $\r_1^3$, we take the rotation
axis as the $z$-axis.    The  parametrization of the surface is given in \eqref{p1} with $f'^2<1$ and the unit normal vector field  is%
\begin{equation}\label{n-timelike}
N=-\frac{1}{\sqrt{1-f'^2}}(\cos \theta f',\sin \theta
f',1).
\end{equation}%
The Gauss curvature   is%
\begin{equation*}
K=-\frac{f'f''}{r(1-f'^2)^2}.
\end{equation*}
If $\vec{v}=(v_1,v_2,v_3)$,  equation \eqref{k1} becomes%
\begin{equation*}
P_0(r)+P_1(r)\cos \theta +P_2(r)\sin \theta =0,
\end{equation*}%
where 
\begin{eqnarray*}
P_0 &=&\left( -\frac{f'f''}{r(1-f'^2)^2}\right) ^{\alpha }-v_3(1-f'^2)^{-1/2}, \\
P_1 &=&v_1f'(1-f'^2)^{-1/2},\\
P_2&=&v_2f'(1-f'^2)^{-1/2}.
\end{eqnarray*}%
  Since the functions $%
\left\{ 1,\cos \theta ,\sin \theta \right\} $ are linearly independent,  all coefficients $P_k$ must vanish.    Assume first that $f'\neq 0$ at some point. Then $P_1=0$ and $P_2=0$ imply    $v_1=v_2=0,$ proving the result. If $f'=0$ for all $r$, then $f$ is a constant function and the surface is a plane and $K=0$, which it is impossible.    \end{proof}

After this proposition, the speed is   $\vec{v}=(0,0,v_3)$. According to $N$ in \eqref{n-timelike}, and because $\langle N,\vec{v}\rangle$ is positive in \eqref{k1}, without loss of generality  we can assume that $\vec{v}=(0,0,1)$. Then \eqref{k1} is equivalent to $P_0=0$, that is,
\begin{equation} \label{k2}
\left( -\frac{f'f''}{r(1-f'^2)^2}\right) ^{\alpha }=\frac{1}{\sqrt{1-f'^2}}.
\end{equation}
Let us introduce the auxiliary function
$$h=\frac{1}{\sqrt{1-f'^2}}.$$
In terms of  $h$, equation \eqref{k2} writes as
$$\left(-\frac{hh'}{r}\right)^\alpha=h,$$
or equivalently, 
\begin{equation*}
h^{\frac{\alpha -1}{\alpha}}h'=-r.
\end{equation*}%
 A first  integration gives
\begin{equation*}
h(r)=\left\{
\begin{array}{lll}
me^{\frac{-r^2}{2}},m>0, &  &\alpha =\frac{1}{2} \\
 \left( m-\frac{2\alpha-1 }{2\alpha }r^{2}\right) ^{\frac{\alpha }{2\alpha
-1}},m\in \r, & &\alpha \neq \frac{1}{2}.%
\end{array}%
\right.
\end{equation*}
From $h$, we deduce the value of $f'$, 
  \begin{equation*}
f'(r)=\left\{
\begin{array}{ll}
\pm\left(1-\frac{1}{m^2}e^{r^2 }\right)^{1/2},  & \alpha =\frac{1}{2} \\
\pm\left(1-\left(m-\frac{2 \alpha -1}{2 \alpha }r^2\right)^{\frac{2 \alpha }{1-2 \alpha }}\right)^{1/2},& \alpha \neq \frac{1}{2}.%
\end{array}%
\right.
\end{equation*}%
Here some constraints  arise naturally which determine the domain of $f$. Consequently, we have the classification of rotational $K^\alpha$-translators   with timelike axis.

\begin{theorem}\label{clas-1} Any rotational $K^\alpha$-translator   with timelike axis parametrizes as \eqref{p1}, where
\begin{equation}\label{p-rot1}
f(r) =\left\{
\begin{array}{lll}
\pm \int^{r}\left( 1- \frac{1}{m^2}e^{t^2}\right) ^{1/2}\, dt, m>1,& &\alpha =\frac{1%
}{2} \\
\pm \int^{r}\left( 1- \left( m-\frac{2\alpha-1 }{2\alpha }t^2\right) ^{\frac{%
2\alpha }{1-2\alpha }}\right) ^{1/2}\, dt,m\in \r, & &\alpha \neq \frac{%
1}{2}.%
\end{array}%
\right.
\end{equation}
The maximal  domain of the above function $f(r)$ is:
\begin{enumerate}
\item Case $\alpha=1/2$: $(0,\sqrt{\log m^2})$, where $m>1$.
\item  Case $\alpha\in (0,1/2)$: $(\sqrt{\frac{2\alpha}{2\alpha-1}m},\sqrt{\frac{2\alpha}{1-2\alpha}(1-m)})$ if $m<0$ or $(0,\sqrt{\frac{2\alpha}{1-2\alpha}(1-m)})$ if $0\leq m<1$.
 \item  Case $\alpha\not\in [0,1/2]$: $(0,\sqrt{\frac{2\alpha}{2\alpha-1}(m-1)})$ and $m>1$.
\end{enumerate}
\end{theorem}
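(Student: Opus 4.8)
The plan is to start from the two pieces of information already assembled before the statement, namely the explicit formula for $f'(r)$ coming from the first integration of the ODE, and then simply integrate and track the constraints that keep the expression under the square root nonnegative (and, implicitly, keep $f'^2<1$ so that the surface is genuinely spacelike). Since Proposition \ref{p-tim1} has reduced us to $\vec v=(0,0,1)$ and equation \eqref{k2}, and the substitution $h=1/\sqrt{1-f'^2}$ has already linearized the problem to $h^{(\alpha-1)/\alpha}h'=-r$, the content of the theorem is purely the bookkeeping of when the resulting $f'$ is defined and $|f'|<1$. So the proof will essentially be: (i) record $f(r)=\pm\int^r f'(t)\,dt$ with $f'$ as displayed; (ii) impose $0\le 1-(\text{stuff})\le 1$; (iii) read off the endpoints of the maximal interval in each of the three regimes of $\alpha$.

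First I would treat the case $\alpha=1/2$. Here $f'(r)=\pm\bigl(1-\tfrac1{m^2}e^{r^2}\bigr)^{1/2}$, which is real exactly when $e^{r^2}\le m^2$, i.e. $r^2\le \log m^2$; this forces $m^2>1$, that is $m>1$ (the integration constant was taken positive), and the maximal domain is $(0,\sqrt{\log m^2})$. Note that automatically $f'^2=1-\tfrac1{m^2}e^{r^2}<1$ on the open interval, so the surface stays spacelike; at the right endpoint $f'^2=0$? No — at the right endpoint $e^{r^2}=m^2$ so $f'=0$; and as $r\to 0^+$, $f'^2\to 1-1/m^2<1$. One should check the behavior at the endpoints to confirm maximality, but the interval itself is dictated by the radicand.

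Next, for $\alpha\neq 1/2$ I would write $g(r)=m-\tfrac{2\alpha-1}{2\alpha}r^2$ and $f'(r)=\pm\bigl(1-g(r)^{\frac{2\alpha}{1-2\alpha}}\bigr)^{1/2}$. The constraints are $g(r)>0$ (so the power is defined, and also so that $h>0$), and $g(r)^{\frac{2\alpha}{1-2\alpha}}\le 1$. The exponent $\tfrac{2\alpha}{1-2\alpha}$ changes sign according to whether $\alpha\in(0,1/2)$ or $\alpha\notin[0,1/2]$, so the inequality $g^{\text{exponent}}\le 1$ translates into $g\le 1$ in one regime and $g\ge 1$ in the other; combined with $g>0$ and with the shape of the downward/upward parabola $g(r)$ (its concavity depends on the sign of $\tfrac{2\alpha-1}{2\alpha}$) this produces precisely the listed intervals: in the range $\alpha\in(0,1/2)$ one gets the two sub-cases according to the sign of $m$ (when $m<0$ the lower endpoint is $\sqrt{\frac{2\alpha}{2\alpha-1}m}$ coming from $g(r)=1$, wait — it comes from $g(r)=0$? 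I will need to be careful matching which endpoint comes from $g=0$ versus $g=1$), and when $\alpha\notin[0,1/2]$ one gets the single interval $(0,\sqrt{\frac{2\alpha}{2\alpha-1}(m-1)})$ with $m>1$. In each case one must also verify $f'^2<1$ strictly on the interior, which holds because the radicand is strictly between $0$ and $1$ there, and confirm that $r\to 0$ is a genuine endpoint or that the interval really cannot be extended.

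The main obstacle is not any deep idea but the case analysis: one must correctly juggle the sign of the exponent $\tfrac{2\alpha}{1-2\alpha}$, the concavity of the parabola $g(r)$, the sign and size of the integration constant $m$, and the two separate requirements ($g>0$ for the power to make sense, and $g^{\text{exponent}}\le 1$ for $f'$ to be real), and then match the resulting endpoints with the closed-form expressions $\sqrt{\frac{2\alpha}{2\alpha-1}m}$, $\sqrt{\frac{2\alpha}{1-2\alpha}(1-m)}$, $\sqrt{\frac{2\alpha}{2\alpha-1}(m-1)}$ claimed in the statement. A secondary subtlety worth a remark is ruling out spurious choices of $m$ that would make the interval empty, and checking that on the maximal interval the surface indeed fails to extend (for instance because $f'^2\to 1$, so the surface ceases to be spacelike, at an endpoint different from $r=0$), which is exactly the phenomenon flagged in the introduction about extending a $K^\alpha$-translator by a timelike one.
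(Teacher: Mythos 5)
Your proposal is correct and follows essentially the same route as the paper: the paper derives $h$ and hence $f'$ by the substitution $h=(1-f'^2)^{-1/2}$ exactly as you describe, and then obtains the domains from the two constraints you identify (positivity of the base $g(r)=m-\frac{2\alpha-1}{2\alpha}r^2$ and $g^{2\alpha/(1-2\alpha)}\le 1$, with the inequality direction flipping with the sign of the exponent). The one point you left hanging resolves as follows: for $\alpha\in(0,1/2)$ and $m<0$ the lower endpoint $\sqrt{\frac{2\alpha}{2\alpha-1}m}$ comes from $g=0$ (where $h\to\infty$, i.e.\ $f'^2\to1$ and the surface ceases to be spacelike), while the upper endpoints all come from $g=1$ (where $h=1$, i.e.\ $f'=0$, and maximality there follows since $f''$ blows up by \eqref{k2}), which is consistent with the discussion following the theorem in the paper.
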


Let us observe that in all cases, the function $f=f(r)$ is defined in a bounded domain of $\r$. It is important to know the behaviour of $f'$ at the endpoints of the domain because it explains what is happening.   So, for any $\alpha \neq 0$, we have $$ \lim_{r \to r_0} f'(r)=0, $$ where $r_0$ is the right endpoint of the domain. Then, as independently from the value of $\alpha$, the parenthesis in \eqref{k2} has a finite limit at $r_0$ because the right hand-side has $1$ as limit. Thus the second derivative of $f$ blows up at $r_0$. This implies that we cannot extend the solution beyond this endpoint. In contrast, in the particular case $\alpha\in (0,1/2)$ and $m<0$, and if $r_0=\sqrt{\frac{2\alpha}{2\alpha-1}m}$, we have
$$ \lim_{r\to r_0}f'(r)^2=1,$$
which means that at $r_0$ the surface leaves to be spacelike. As we will see in Section \ref{time},  we can extend the surface beyond of the value $r_0$ by means of a rotational timelike $K^\alpha$-translator with the same rotation axis.

The case $\alpha=1/4$ will be a distinguished case in this paper, independently of the causal character of the surface as well as the rotation axis. This is due to that equation \eqref{k2} simplifies enormously  because the terms involving $1-f'^2$ disappear and consequently, the integration can be done explicitly.  

 \begin{corollary}
Rotational   $K^{1/4}$-translators with   timelike axis parametrize as  \eqref{p1}, where
$$f(r)= \pm \frac{1}{2} \left(r \sqrt{1-m-r^2}-(m-1) \tan ^{-1}\left(\frac{r}{\sqrt{1-m-r^2}}\right)\right),$$
where $r\in (0,\sqrt{1-m})$ and $m<1$.  
\end{corollary}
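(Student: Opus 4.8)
The plan is to specialize Theorem~\ref{clas-1} to $\alpha=1/4$ and carry out the resulting elementary integration. First I would observe that $\alpha=1/4$ falls into case (2) of Theorem~\ref{clas-1} (i.e. $\alpha\in(0,1/2)$), and substitute $\alpha=1/4$ into the formula \eqref{p-rot1}. The exponent $\frac{2\alpha}{1-2\alpha}$ becomes $\frac{1/2}{1/2}=1$, so the integrand collapses from a nested power to $\bigl(1-(m-\frac{2\alpha-1}{2\alpha}t^2)\bigr)^{1/2}$; with $\alpha=1/4$ the coefficient $\frac{2\alpha-1}{2\alpha}=\frac{-1/2}{1/2}=-1$, hence $m-\frac{2\alpha-1}{2\alpha}t^2=m+t^2$, and the integrand is simply $\sqrt{1-m-t^2}$. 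This is exactly the phenomenon flagged in the paragraph preceding the corollary: at $\alpha=1/4$ the $1-f'^2$ terms cancel in \eqref{k2}.

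Next I would evaluate $f(r)=\pm\int^r\sqrt{1-m-t^2}\,dt$, the standard antiderivative of $\sqrt{a^2-t^2}$ with $a^2=1-m$, giving
\[
f(r)=\pm\frac12\Bigl(r\sqrt{1-m-r^2}+(1-m)\arcsin\frac{r}{\sqrt{1-m}}\Bigr)+\text{const}.
\]
To match the stated form I would rewrite $\arcsin\frac{r}{\sqrt{1-m}}$ in terms of $\arctan$: since $\arcsin x=\arctan\frac{x}{\sqrt{1-x^2}}$, one has $\arcsin\frac{r}{\sqrt{1-m}}=\arctan\frac{r}{\sqrt{1-m-r^2}}$, and $(1-m)=-(m-1)$ converts the $+(1-m)\arcsin(\cdots)$ term into $-(m-1)\tan^{-1}(\cdots)$, reproducing the displayed expression (the additive constant is irrelevant, as it only translates the surface along the axis and any surface of revolution is determined up to such a translation).

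Finally I would pin down the domain. From case (2) of Theorem~\ref{clas-1} with $m<0$ the domain is $(\sqrt{\frac{2\alpha}{2\alpha-1}m},\sqrt{\frac{2\alpha}{1-2\alpha}(1-m)})$ and with $0\le m<1$ it is $(0,\sqrt{\frac{2\alpha}{1-2\alpha}(1-m)})$; plugging $\alpha=1/4$ gives $\frac{2\alpha}{1-2\alpha}=1$ so the right endpoint is $\sqrt{1-m}$ in both subcases, and the left endpoint $\sqrt{\frac{2\alpha}{2\alpha-1}m}=\sqrt{-m}$ when $m<0$. Here I should note a slight discrepancy with the paper's statement, which asserts $r\in(0,\sqrt{1-m})$ for all $m<1$: this matches Theorem~\ref{clas-1} only for $0\le m<1$, whereas for $m<0$ the integrand $\sqrt{1-m-t^2}$ is actually still real and positive on $(0,\sqrt{-m})$ (it equals $\sqrt{1-m-t^2}$ with $1-m-t^2>1>0$ there), so in fact $f$ extends to all of $(0,\sqrt{1-m})$ regardless of the sign of $m$ — the apparent lower cutoff $\sqrt{-m}$ in Theorem~\ref{clas-1} is the point where $f'^2=1$ and the surface ceases to be spacelike only in the nested-power regime, but for $\alpha=1/4$ that constraint evaporates along with the $1-f'^2$ terms. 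The one genuine requirement is $1-m>0$, i.e. $m<1$, so that the integrand is real near $r=0$. The main obstacle is thus not the integration, which is routine, but correctly tracking the domain: I would verify directly from $f'(r)^2=1-(m+r^2)^{\frac{2\alpha}{1-2\alpha}}\big|_{\alpha=1/4}=1-(m+r^2)$ that $f'^2<1$ exactly on $(0,\sqrt{1-m})$ (and $f'^2\ge 0$ there as well since $m+r^2\ge 0$ is automatic once $r\ge\sqrt{-m}$ and otherwise $m+r^2$ could be negative, in which case $f'^2>1$ — so the spacelike locus is precisely where $0\le m+r^2\le 1$), and reconcile this with the interval quoted in the corollary.
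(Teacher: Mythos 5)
Your computation is exactly the route the paper intends (the corollary is stated without proof as a direct specialization of Theorem~\ref{clas-1}): at $\alpha=1/4$ the exponent $\frac{2\alpha}{1-2\alpha}$ equals $1$ and $\frac{2\alpha-1}{2\alpha}=-1$, so the integrand collapses to $\sqrt{1-m-t^2}$, and your antiderivative together with the identity $\arcsin\frac{r}{\sqrt{1-m}}=\tan^{-1}\frac{r}{\sqrt{1-m-r^2}}$ reproduces the displayed formula correctly.

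There is, however, a genuine error in your domain discussion. You claim that for $m<0$ the lower cutoff $\sqrt{-m}$ of Theorem~\ref{clas-1} ``evaporates'' at $\alpha=1/4$ because the integrand $\sqrt{1-m-t^2}$ is still real on $(0,\sqrt{-m})$. Realness of the integrand only says that $f'$ is defined there; it does not make the surface a spacelike $K^{1/4}$-translator. On $(0,\sqrt{-m})$ one has $f'(r)^2=1-(m+r^2)>1$, so the surface is timelike there and equation \eqref{k2}, whose right-hand side is $(1-f'^2)^{-1/2}$, is not even real; equivalently, $h=(m+r^2)^{-1/2}$ fails to be a positive real number. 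The spacelike constraint does not disappear at $\alpha=1/4$ --- it is exactly the condition $0< m+r^2\le 1$, which your own final parenthetical concedes, contradicting the ``evaporates'' claim two sentences earlier. The correct conclusion is the opposite of the one you draw: for $m<0$ the maximal spacelike domain is $(\sqrt{-m},\sqrt{1-m})$, in agreement with case (2) of Theorem~\ref{clas-1}, and it is the corollary's blanket statement $r\in(0,\sqrt{1-m})$ for all $m<1$ that is loose (it is accurate only for $0\le m<1$). At $r=\sqrt{-m}$ one has $f'^2\to 1$ and the surface degenerates, which is precisely the extension-by-a-timelike-translator phenomenon the paper discusses after Theorem~\ref{clas-1}; it is not a removable artifact of the nested-power form.
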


We study the behaviour  of the  rotational $K^{\alpha}$-translators   that intersect the rotation axis. This implies that the generating curve $r\mapsto (r,0,f(r))$ is defined at the limit at $r=0$. Here we are interested in two situations whose intersection occurs orthogonally or  at a conical point, that is, a point where the metric is degenerated. This is equivalent 
to   $\lim_{r\to 0}f'(r)=0$ (or  $\lim_{r\to 0}h(r)=1$) in the first case  or  $\lim_{r\to 0}f'(r)^2=1$ ($\lim_{r\to 0}h(r)=\infty$) in the second one.

\begin{corollary} \label{exist-1}
There are not rotational $K^{\alpha}$-translators with timelike axis  intersecting orthogonally  the rotation axis. On the other hand, if $\alpha\in (0,1/2)$, there are rotational $K^{\alpha}$-translators with timelike axis    intersecting the rotation axis at a conical point.
\end{corollary}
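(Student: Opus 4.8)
The plan is to reduce everything to the behaviour of the auxiliary function $h=1/\sqrt{1-f'^2}$ as $r\to0$, reading the relevant limits off from the classification in Theorem~\ref{clas-1}. Recall that the generating curve $r\mapsto(r,0,f(r))$ meets the rotation axis exactly when $0$ lies in the closure of the maximal domain of $f$, and that in that case the intersection is orthogonal iff $\lim_{r\to0}h(r)=1$ (i.e. $\lim_{r\to0}f'(r)=0$) and occurs at a conical point iff $\lim_{r\to0}h(r)=\infty$ (i.e. $\lim_{r\to0}f'(r)^2=1$). So the whole proof is a case check over the three regimes of $\alpha$ appearing in Theorem~\ref{clas-1}.

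First I would dispose of orthogonal intersections. If $\alpha=1/2$, then $h(r)=me^{-r^{2}/2}$ with $m>1$, so $\lim_{r\to0}h(r)=m>1$, never $1$. If $\alpha\notin[0,1/2]$, then $h(r)=\bigl(m-\tfrac{2\alpha-1}{2\alpha}r^{2}\bigr)^{\alpha/(2\alpha-1)}$ with $m>1$ and exponent $\alpha/(2\alpha-1)>0$, hence $\lim_{r\to0}h(r)=m^{\alpha/(2\alpha-1)}>1$, again never $1$. If $\alpha\in(0,1/2)$, the exponent $\alpha/(2\alpha-1)$ is negative: for $m<0$ the left endpoint of the domain of $f$ is the positive number $\sqrt{\tfrac{2\alpha}{2\alpha-1}m}$, so $0$ is not in the closure of the domain and the surface never reaches the axis; for $0\le m<1$ one has $\lim_{r\to0}h(r)=m^{\alpha/(2\alpha-1)}$, which equals $+\infty$ when $m=0$ and is $>1$ when $0<m<1$. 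In no regime is the limit $1$, which settles the first assertion for every $\alpha\neq0$.

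For the second assertion I would simply exhibit the surfaces: fix $\alpha\in(0,1/2)$ and take $m=0$ in the branch $\alpha\neq1/2$ of Theorem~\ref{clas-1}. Then the maximal domain of $f$ is $\bigl(0,\sqrt{\tfrac{2\alpha}{1-2\alpha}}\bigr)$ and $h(r)=\bigl(\tfrac{1-2\alpha}{2\alpha}r^{2}\bigr)^{\alpha/(2\alpha-1)}$; since the base tends to $0$ and the exponent is negative, $\lim_{r\to0}h(r)=\infty$. Hence the associated surface of revolution \eqref{p1} is a rotational $K^{\alpha}$-translator with timelike axis that meets the axis at a conical point (for $\alpha=1/4$ this is explicitly the $m=0$ member of the family in the corollary above). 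This proves the existence statement.

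The argument is almost entirely bookkeeping; the points deserving a little care are keeping straight the signs of $2\alpha-1$ and of $\alpha/(2\alpha-1)$ together with the admissible range of the constant $m$ in each regime (all of which are already fixed by Theorem~\ref{clas-1}), and, in the case $\alpha\in(0,1/2)$ with $m<0$, not confusing ``the surface remains spacelike up to some $r_{0}>0$ where its induced metric degenerates'' with ``the surface reaches the axis'' --- it is the former, so those parameter values are irrelevant here. One should also remark that at a conical point the equation \eqref{k1} holds throughout the open domain $r>0$ and only the induced metric degenerates in the limit, so it is legitimate to call the limit surface a $K^{\alpha}$-translator through a conical point.
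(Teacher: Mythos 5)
Your argument is correct and is essentially the paper's own proof: both reduce the question to computing $\lim_{r\to 0}h(r)$ in each of the three regimes of $\alpha$ from Theorem~\ref{clas-1}, observing that this limit is never $1$ (so no orthogonal intersection) and equals $\infty$ exactly when $\alpha\in(0,1/2)$ and $m=0$ (giving the conical point). The extra bookkeeping you supply (sign of the exponent $\alpha/(2\alpha-1)$, exclusion of the $m<0$ case because $0$ is not in the closure of the domain) only makes explicit what the paper leaves implicit.
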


\begin{proof}
If $\alpha=1/2$, $\lim_{r\to 0}h(r)=m>1$.  If $\alpha\in (0,1/2)$, then the generating curve arrives to the rotation axis when $m\in [0,1)$. In such a case  $\lim_{r\to 0}h(r)=m^{\alpha/(2\alpha-1)}\not=1$. Analogously, if $\alpha\not\in [0,1/2]$, $\lim_{r\to 0}h(r)=m^{\alpha/(2\alpha-1)}\not=1$. This proves that the intersection with the rotation axis is not orthogonal. 

The above limits also prove the non existence of conical points when $\alpha=1/2$ and $\alpha\not\in [0,1/2]$. If  $\alpha\in (0,1/2)$ and $0\leq m<1$,   the function $f$ is defined at the limit at $r=0$ with   
$$\lim_{r\to 0}h(r)=\lim_{r\to 0}m^{\alpha/(2\alpha-1)}.$$
This limit is $\infty$ if $m=0$.
\end{proof} 

This corollary contrasts with the Euclidean setting. In \cite{al}, the authors have proved the existence of rotational  $K^\alpha$-translators intersecting orthogonally the rotation axis for all values of $\alpha$. 

\begin{remark} 
For some particular values of $\alpha $ and the constant $m$,  we can explicitly write the function $f(r)$ in \eqref{p-rot1}.  Let us denote by $f_\alpha$ the solution of \eqref{p-rot1} to indicate the value of $\alpha$.  
\begin{enumerate}
\item Case   $\alpha =1$.  If $m=1$, then 
\begin{equation*}
f_1 (r)=\pm \sqrt{ r^2-4} \mp  \sqrt{2} \tan ^{-1}\left( \frac{\sqrt{ r^2-4}}{\sqrt{2}} \right).
\end{equation*}
 
\item Case that  $\alpha =1/6$.  If $m=0$, then   
\begin{equation*}
f_{\frac16}(r)=\pm \frac{\sqrt{2}}{3}(1-\sqrt{2}r)^{3/2}.
\end{equation*}
\item Case that $\alpha =1/10$.  If  we choose $m=0$ then  
\begin{equation*}
f_{\frac {1}{10}}(r)=\pm \frac{2}{15} \sqrt{1-\sqrt{2 r}} \left(-6 r+\sqrt{2 r}+2\right).
\end{equation*}
\end{enumerate}
\end{remark}
 \subsection{The axis is spacelike }  \label{ss2}
As in the case of timelike axis, we  first prove that the rotation axis is parallel to the speed  $\vec{v}$.  

\begin{proposition} \label{p-spa1}
If $\Sigma $ is a rotational $K^{\alpha }$-translator   with   spacelike axis, then $\vec{v}$ is parallel to its
rotation axis.
\end{proposition}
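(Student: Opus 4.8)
The plan is to run the same argument as in Proposition~\ref{p-tim1}, with the hyperbolic rotation replacing the Euclidean one. After a rigid motion I would take the rotation axis to be the $x$-axis, $L=\mbox{sp}\{e_1\}$. Since $\Sigma$ is a $K^{\alpha}$-translator it is spacelike, so among the two parametrizations available for a spacelike axis only \eqref{p21} is admissible (the parametrization \eqref{p22} always produces a timelike surface). Hence I would work with $X(r,\theta)=(f(r),r\sinh\theta,r\cosh\theta)$ under the constraint $f'^2>1$ that makes the induced metric Riemannian.

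Next I would compute the Gauss map and the Gauss curvature. The Lorentzian vector product $X_r\times X_\theta$ is proportional to $(1,f'\sinh\theta,f'\cosh\theta)$, which is timelike precisely because $f'^2>1$; after normalization and a choice of orientation,
$$N=\frac{1}{\sqrt{f'^2-1}}\,\bigl(1,\,f'\sinh\theta,\,f'\cosh\theta\bigr).$$
By rotational symmetry, $K$ is a function of $r$ alone. Substituting $N$ and $\vec v=(v_1,v_2,v_3)$ into \eqref{k1} and using the signature of $\langle,\rangle$, the equation takes the form
$$Q_0(r)+Q_1(r)\sinh\theta+Q_2(r)\cosh\theta=0,$$
with $Q_0=K^{\alpha}-v_1(f'^2-1)^{-1/2}$ and, up to sign, $Q_1=v_2 f'(f'^2-1)^{-1/2}$ and $Q_2=v_3 f'(f'^2-1)^{-1/2}$.

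Since the functions $\{1,\sinh\theta,\cosh\theta\}$ are linearly independent (as functions of $\theta\in\r$), each coefficient $Q_k$ must vanish identically. The constraint $f'^2>1$ gives $f'\neq 0$ everywhere, so $Q_1=Q_2=0$ force $v_2=v_3=0$; hence $\vec v=(v_1,0,0)$ is parallel to $L$. Unlike the timelike-axis case, no case distinction on $f'$ is needed: the cone $f'=\text{const}$ would give $K=0$, which is excluded from the start.

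I do not expect a genuine obstacle here: the proof is essentially a line-by-line transcription of the argument for the timelike axis. The only points requiring care are the sign bookkeeping in the Lorentzian cross product and in $\langle N,\vec v\rangle$ coming from the metric $\langle,\rangle=dx^2+dy^2-dz^2$, and the replacement of the independent triple $\{1,\cos\theta,\sin\theta\}$ by $\{1,\sinh\theta,\cosh\theta\}$ — which is all the linear-independence argument actually uses.
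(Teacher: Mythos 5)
Your proposal is correct and follows essentially the same route as the paper: restrict to the parametrization \eqref{p21} (since \eqref{p22} is always timelike), expand $\langle N,\vec v\rangle$ in the linearly independent system $\{1,\sinh\theta,\cosh\theta\}$, and conclude $v_2=v_3=0$ from the vanishing of the coefficients. Your remark that $f'^2>1$ makes the case distinction on $f'$ (needed for the timelike axis) unnecessary is a small but accurate refinement of the paper's argument.
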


\begin{proof}
After a rigid motion of $\r_1^3$, we suppose that the rotation axis is the $x$-axis.  Then, the generating curve of the surface of revolution $\Sigma$ is a planar curve included in the $xz$-plane and the parametrization of the surface is \eqref{p21} where $f'^2>1$. Recall that the parametrization  \eqref{p22} is not possible, see Section \ref{sec2}.  The Gauss curvature is
$$K=-\frac{f'f''}{r(f'^2 -1)^2}$$
and the unit normal vector field
\begin{equation}\label{n-spacelike}
N=-\frac{1}{\sqrt{f'^2 -1}}(1,f'\sinh\theta,f'\cosh\theta).
\end{equation}
The equation \eqref{k1}  is now
\begin{equation*}
P_0(r)+P_1(r) \cosh \theta +P_2(r) \sinh \theta =0,
\end{equation*}%
where
\begin{eqnarray*}
P_0 &=&\left( -\frac{f'f''}{r(f'^2 -1)^2}%
\right) ^{\alpha }+v_1 (f'^2 -1)^{-1/2}=0, \\
P_1 &=&v_2 f' \sinh \theta (f'^2 -1)^{-1/2},\\
P_{2}&=&-v_3 f' \cosh \theta (f'^2 -1)^{-1/2}.
\end{eqnarray*}%
Again we deduce   $v_2=v_3=0$, obtaining the result.
\end{proof}

By this proposition, the speed $\vec{v}$ is $(v_1,0,0)$. Since we require that $\langle N,\vec{v}\rangle$ must be positive in \eqref{k1}, and taking into account the expression of $N$ in 
\eqref{n-spacelike},  we can assume $\vec{v}=(-1,0,0)$.  Now \eqref{k1} is equivalent to $P_0=0$, yielding
\begin{equation} \label{k3}
\left (-\frac{f'f''}{r(f'^2 -1)^2} \right )^\alpha = \frac{1}{\sqrt{f'^2 -1}}.
\end{equation}
Let $h=(f'^2 -1)^{-1/2}$. Therefore \eqref{k3} is 
$$ h^{\frac{\alpha -1}{\alpha}}h'= r,$$
 and integrating, we have 
\begin{equation*}
h(r)=\left\{
\begin{array}{lll}
me^{r^2/2},m>0, &  &\alpha =\frac{1}{2} \\
 \left( m+\frac{2\alpha-1 }{2\alpha }r^{2}\right) ^{\frac{\alpha }{2\alpha
-1}},m\in \r, & &\alpha \neq \frac{1}{2}.%
\end{array}%
\right.
\end{equation*}
In terms of $f'$, we write
\begin{equation}\label{deriv-s}
f'(r)=\left\{
\begin{array}{ll}
\pm\left(1+\frac{1}{m^2}e^{-r^2}\right)^{1/2},  & \alpha =\frac{1}{2} \\
\pm\left(1+\left(m+\frac{2 \alpha -1}{2 \alpha }r^2\right)^{\frac{2 \alpha }{1-2 \alpha }}\right)^{1/2},& \alpha \neq \frac{1}{2}.%
\end{array}%
\right.
\end{equation}
From the expression of $f'$, we have the next classification of the rotational $K^\alpha$-translators with spacelike axis and the domain of the function $f$. 

\begin{theorem}\label{clas-2} Any rotational $K^\alpha$-translator   with spacelike axis parametrizes as \eqref{p21}, where  
\begin{equation}\label{p-rot6}
f(r) =\left\{
\begin{array}{lll}
\pm \int^{r}\left(1+\frac{1}{m^2}e^{-t^2}\right)^{1/2}\, dt, m>0,& &\alpha =\frac{1%
}{2} \\
\pm \int^{r}\left(1+\left(m+\frac{2 \alpha -1}{2 \alpha }t^2\right)^{\frac{2 \alpha }{1-2 \alpha }}\right)^{1/2}\, dt,m\in \r, & &\alpha \neq \frac{%
1}{2}.%
\end{array}%
\right.
\end{equation}
The maximal  domain of the above function $f(r)$ is:
\begin{enumerate}
\item Case $\alpha=1/2$. The domain is $(0,\infty)$.
\item Case $\alpha\in (0,1/2)$. The domain is $ (0,\sqrt{\frac{2\alpha}{1-2\alpha}m})$, where $m>0$.
\item  Case $\alpha\not\in [0,1/2]$. The domain is $(0,\infty )$  if $m\geq 0$  or $(\sqrt{\frac{2\alpha}{1-2\alpha}m},\infty)$ if $m<0$.
\end{enumerate}
\end{theorem}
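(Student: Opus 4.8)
The derivation in the paragraphs preceding the statement already performs most of the work, so my plan is to arrange it as a clean argument and then devote the real effort to the description of the maximal domain, which is the only genuinely new assertion. First I would invoke Proposition \ref{p-spa1} to reduce to the situation where $\vec v$ is parallel to the rotation axis; placing the axis along the $x$-axis by a rigid motion, the surface is parametrized by \eqref{p21} with $f'^2>1$, and using $\langle N,\vec v\rangle>0$ in \eqref{k1} together with the expression \eqref{n-spacelike} for $N$ one normalizes $\vec v=(-1,0,0)$. Substituting the formulas for $K$ and $N$ into \eqref{k1} and separating the linearly independent functions $\{1,\cosh\theta,\sinh\theta\}$ (exactly as in the proof of Proposition \ref{p-spa1}) collapses the equation to the single ODE \eqref{k3}.

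Next I would set $h=(f'^2-1)^{-1/2}$, which is a well-defined positive function precisely because the parametrization is spacelike. A short computation rewrites \eqref{k3} as the separable equation $h^{(\alpha-1)/\alpha}h'=r$; in passing this shows $h'>0$, which is consistent with $K>0$. Integrating the left-hand side — a logarithm when $\alpha=1/2$ and a power of $h$ otherwise — and absorbing the constant of integration into a parameter $m$ yields the two displayed formulas for $h$; then $f'^2=1+h^{-2}$ gives \eqref{deriv-s}, and one more quadrature gives \eqref{p-rot6}, the sign $\pm$ encoding the reflection $x\mapsto -x$ of the generating curve.

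The remaining and principal step is to identify the maximal interval of definition. Because $f'^2=1+h^{-2}>1$ holds automatically wherever $h$ is a positive real number, the domain is exactly the set of $r>0$ on which the explicit formula for $h$ is meaningful. For $\alpha=1/2$ the factor $e^{r^2/2}$ never vanishes, so the domain is $(0,\infty)$. For $\alpha\neq 1/2$ the only obstruction is positivity of the base $m+\frac{2\alpha-1}{2\alpha}r^2$ of the power defining $h$, and I would split on the sign of $\frac{2\alpha-1}{2\alpha}$: when $\alpha\in(0,1/2)$ this coefficient is negative, the base decreases in $r^2$ and stays positive only until $r=\sqrt{\frac{2\alpha}{1-2\alpha}m}$ (which also forces $m>0$); when $\alpha\notin[0,1/2]$ the coefficient is positive, so the base is positive on all of $(0,\infty)$ when $m\geq 0$ and only on $(\sqrt{\frac{2\alpha}{1-2\alpha}m},\infty)$ when $m<0$. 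Collecting these possibilities produces the three cases in the statement. There is no analytic obstacle here; the only thing requiring care is the bookkeeping of the rescaled constant $m$ and of the exponents $\frac{\alpha}{2\alpha-1}$ and $\frac{2\alpha}{1-2\alpha}$ in each regime, since the endpoints are dictated purely by when the elementary expression for $h$ degenerates.
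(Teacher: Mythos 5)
Your proposal is correct and follows essentially the same route as the paper: reduce to the ODE \eqref{k3} via Proposition \ref{p-spa1} and the normalization $\vec v=(-1,0,0)$, substitute $h=(f'^2-1)^{-1/2}$ to get the separable equation $h^{(\alpha-1)/\alpha}h'=r$, integrate twice, and read off the maximal domain from where the expression for $h$ remains a positive real number. Your case analysis of the sign of $\frac{2\alpha-1}{2\alpha}$ reproduces exactly the three domains in the statement, and your observation that at the finite endpoints either $h\to\infty$ (so $f'^2\to 1$ and the surface ceases to be spacelike) or $h\to 0$ (so $f'$ blows up) is consistent with the paper's remarks following the theorem.
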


We point out  that  if  $\alpha\in (0,1/2)$, we have
$$\lim_{r\to \sqrt{\frac{2\alpha}{1-2\alpha}m}}f'(r)^2=1,$$
and in Section \ref{time}, this solution will be extended beyond of this value by means of a rotational timelike $K^\alpha$-translator   with the same rotation axis. We now study if the rotational surface can orthogonally intersect the rotation axis. 

\begin{corollary} \label{exist-12}
There are not rotational $K^{\alpha}$-translators with spacelike axis and  intersecting orthogonally  the rotation axis.  
\end{corollary}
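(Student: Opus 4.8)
The plan is to follow the template of the proof of Corollary \ref{exist-1}, and in fact the argument will be shorter, because the spacelike hypothesis is strictly more restrictive here. First I would invoke Theorem \ref{clas-2}: up to a rigid motion of $\r_1^3$, any rotational $K^\alpha$-translator with spacelike axis is the surface \eqref{p21} whose generating curve is $r\mapsto(f(r),0,r)$ and whose derivative $f'$ is listed in \eqref{deriv-s}; moreover, being spacelike means that $f'(r)^2>1$ at every $r$ of the maximal domain $I$ of $f$. Next I would record what ``intersecting the rotation axis orthogonally'' means for this family: the surface reaches the $x$-axis only if $0$ is a limit point of $I$ and $f$ extends continuously to $r=0$ (so that $X(r,\theta)\to(f(0),0,0)$), and, exactly as in the timelike-axis case, the intersection is orthogonal precisely when the generating curve arrives at the axis with tangent orthogonal to it, i.e. $\lim_{r\to0}f'(r)=0$.

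Then I would just compare this with \eqref{deriv-s}. For $\alpha=1/2$, for $\alpha\in(0,1/2)$, and for $\alpha\notin[0,1/2]$ with $m\geq0$, the point $0$ lies in $\overline{I}$, and \eqref{deriv-s} gives
$$\lim_{r\to0}f'(r)^2=1+\frac1{m^2}\quad(\alpha=\tfrac12),\qquad \lim_{r\to0}f'(r)^2=1+m^{\frac{2\alpha}{1-2\alpha}}\quad(\alpha\neq\tfrac12),$$
with the convention $0^{\,p}=+\infty$ for $p<0$; in each of these subcases the limit is $\geq1$ --- in fact strictly bigger than $1$, or $+\infty$ --- so $\lim_{r\to0}f'(r)\neq0$ and the intersection cannot be orthogonal. (In the subcase $\alpha>1/2$, $m=0$ one checks separately that $f'\sim c\,r^{-q}$ with $q>1$ near $0$, hence $f$ does not even extend to $r=0$ and the surface misses the axis.) Finally, when $\alpha\notin[0,1/2]$ with $m<0$ the domain is $\big(\sqrt{\tfrac{2\alpha}{1-2\alpha}m},\infty\big)$, whose infimum is positive, so again the surface never meets the axis. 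Collecting the cases yields the statement.

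The point I would emphasise --- and the only thing I would call an obstacle, conceptual rather than technical --- is that the failure of orthogonality is automatic: unlike the Euclidean situation of \cite{al}, where the profile curve of a round cap meets the axis horizontally, here a horizontal tangent ($f'\to0$) is forbidden outright, since it would make the generating curve, which lies on the spacelike surface $\Sigma$, tangent to the timelike $z$-direction. Everything else is the routine bookkeeping of which items of Theorem \ref{clas-2} actually produce a surface touching the axis, which \eqref{deriv-s} settles immediately. As a free byproduct, the same limits give $\lim_{r\to0}f'(r)^2\neq1$ in every case where the axis is reached, so --- in contrast with the timelike-axis situation of Corollary \ref{exist-1} --- the surface cannot meet the axis even at a conical point.
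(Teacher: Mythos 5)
Your proposal is correct and follows essentially the same route as the paper: the orthogonality condition is $\lim_{r\to 0}f'(r)=0$, and \eqref{deriv-s} gives $\lim_{r\to 0}f'(r)^2=1+\frac{1}{m^2}$ or $1+m^{\frac{2\alpha}{1-2\alpha}}\geq 1$, which can never vanish. Your added remark that spacelikeness already forces $f'^2>1$ (so a horizontal tangent is excluded a priori) is a clean conceptual shortcut, but the computation you and the paper both perform is the same.
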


\begin{proof} The orthogonality condition with the rotation axis is equivalent to $\lim_{r\to 0}f'(r)=0$. From \eqref{deriv-s}, we have
$$\lim_{r\to 0}f'(r)=\left\{\begin{array}{ll}
\pm\left(1+\frac{1}{m^2}\right)^{1/2},  & \alpha =\frac{1}{2} \\
\pm\left(1+ m^{\frac{2 \alpha }{1-2 \alpha }}\right)^{1/2},& \alpha \neq \frac{1}{2},%
\end{array}%
\right.$$
and in both cases, this limit cannot be $0$.
\end{proof}

The particular value $\alpha =1/4$  can be solved explicitly.

\begin{corollary}
Rotational   $K^{1/4}$-translators with    spacelike axis parametrize as  \eqref{p21}, where
$$f(r)= \pm \frac{1}{2} \left(r \sqrt{1+m-r^2}+(1+m) \tan ^{-1}\left(\frac{r}{\sqrt{1+m-r^2}}\right)\right) ,$$
where $r \in (0,\sqrt{1+m})$ and $m>-1$.
\end{corollary}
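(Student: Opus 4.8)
The plan is to specialize Theorem~\ref{clas-2} to the exponent $\alpha=1/4$ and then integrate the resulting radical in closed form. First I would note that for $\alpha=1/4$ one has $\frac{2\alpha-1}{2\alpha}=-1$ and $\frac{2\alpha}{1-2\alpha}=1$, so the integrand in the second branch of \eqref{p-rot6} collapses to
\[
\left(1+\left(m+\tfrac{2\alpha-1}{2\alpha}t^2\right)^{\frac{2\alpha}{1-2\alpha}}\right)^{1/2}=\sqrt{\,1+m-t^2\,}.
\]
Equivalently, this is the simplification announced in the introduction: for $\alpha=1/4$, raising \eqref{k3} to the fourth power removes the common factor $(f'^2-1)^{-2}$ and leaves $f'f''=-r$, that is $(f'^2)'=-2r$, so that $f'^2=1+m-r^2$, the integration constant being written here as $1+m$.

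Next I would compute a primitive of $\sqrt{1+m-t^2}$. Setting $a^2=1+m$, which is legitimate precisely when $m>-1$ (the condition that makes the radicand positive near $r=0$), the standard antiderivative, checked by differentiation, is
\[
\int^{r}\sqrt{a^2-t^2}\,dt=\frac{1}{2}\left(r\sqrt{a^2-r^2}+a^2\arcsin\frac{r}{a}\right)+C.
\]
To bring this to the stated form I would use the elementary identity $\arcsin\frac{r}{a}=\arctan\frac{r}{\sqrt{a^2-r^2}}$, valid for $r\in[0,a)$ (put $\phi=\arcsin(r/a)$ and read off $\tan\phi$). Substituting $a^2=1+m$ and absorbing $C$ by a vertical translation of the generating curve --- a rigid motion of $\r_1^3$ --- one obtains
\[
f(r)=\pm\frac{1}{2}\left(r\sqrt{1+m-r^2}+(1+m)\arctan\frac{r}{\sqrt{1+m-r^2}}\right),
\]
the two signs corresponding, just as in Theorem~\ref{clas-2}, to the two orientations of the profile curve.

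It remains to identify the domain. The integrand $\sqrt{1+m-r^2}$ is real exactly where $r^2\le 1+m$; for this set to contain an open interval issuing from the axis $\{r=0\}$ one needs $1+m>0$, i.e. $m>-1$, and then the maximal domain of $f$ is $(0,\sqrt{1+m})$, as claimed. If one wishes to record where the surface \eqref{p21} is genuinely spacelike, this is the sub-interval on which $f'^2=1+m-r^2>1$, namely $r\in(0,\sqrt{m})$ with $m>0$; at $r=\sqrt{m}$ the induced metric degenerates, while on $(\sqrt{m},\sqrt{1+m})$ one has $f'^2<1$, so there the surface is timelike. This is exactly why the profile function is recorded on the full interval $(0,\sqrt{1+m})$, and it fits with the extension phenomenon mentioned after Theorem~\ref{clas-2}.

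There is no essential difficulty: the argument is a direct specialization followed by an elementary quadrature. The two points that deserve a little care are the passage from the $\arcsin$-primitive to the $\arctan$-form appearing in the statement, and the remark that for $\alpha=1/4$ the exponent $\frac{2\alpha}{1-2\alpha}$ equals $1$, so --- unlike the generic-$\alpha$ domain in Theorem~\ref{clas-2}, which would only yield $(0,\sqrt{m})$ --- the base $m-r^2$ is allowed to become negative and the maximal domain of $f$ genuinely extends out to $\sqrt{1+m}$. That enlargement is precisely what singles out $\alpha=1/4$.
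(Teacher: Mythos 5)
Your proof is correct and follows the route the paper implicitly intends: setting $\alpha=1/4$ in Theorem~\ref{clas-2} makes $\tfrac{2\alpha-1}{2\alpha}=-1$ and $\tfrac{2\alpha}{1-2\alpha}=1$, the integrand collapses to $\sqrt{1+m-t^2}$, and the stated formula is the standard quadrature rewritten via $\arcsin(r/a)=\arctan\bigl(r/\sqrt{a^2-r^2}\bigr)$ with $a^2=1+m$. Your closing remark --- that the surface is genuinely spacelike only on $(0,\sqrt{m})$ (hence needs $m>0$) and is timelike on $(\sqrt{m},\sqrt{1+m})$ --- correctly accounts for the domain $(0,\sqrt{1+m})$, $m>-1$, recorded in the corollary, a point the paper leaves implicit.
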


 \subsection{The axis is lightlike }  \label{ss3}

We assume that the causal character of the rotation axis   is lightlike and we prove that the rotation axis must be parallel to the speed.

\begin{proposition} \label{p-light}
If $\Sigma$ is a rotational $K^\alpha$-translator with lightlike axis, then $\vec{v}$ is parallel to the rotation axis.  
\end{proposition}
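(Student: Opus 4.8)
The plan is to run the same argument used in Propositions \ref{p-tim1} and \ref{p-spa1}, now for the parabolic rotation about a lightlike line. After a rigid motion we may take the rotation axis to be $L=\mathrm{sp}\{e_2+e_3\}$, so that $\Sigma$ is parametrized by \eqref{p3}, namely $X(r,t)=(2rt,\ f(r)+r-rt^2,\ f(r)-r-rt^2)$ with $f'>0$ (spacelike case). First I would record the first-order data: a direct computation gives $E=4f'$, $F=0$, $G=4r^2$, hence $EG-F^2=16r^2f'$ in agreement with Section \ref{sec2}. Forming the Lorentzian cross product $X_r\times X_t$ and normalizing with $\langle X_r\times X_t,X_r\times X_t\rangle=-16r^2f'$, the (timelike) unit normal is
\begin{equation*}
N=\pm\frac{1}{2\sqrt{f'}}\bigl(-2t,\ f'-1+t^2,\ f'+1+t^2\bigr).
\end{equation*}

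Next I would compute $K$ from the formula in Section \ref{sec2}. Since $X_t=r\,X_{rt}$, the mixed determinant $\det(X_r,X_t,X_{rt})$ vanishes identically, while $\det(X_r,X_t,X_{rr})=-4rf''$ and $\det(X_r,X_t,X_{tt})=8r^2$, so that
\begin{equation*}
K=\frac{f''}{8rf'^2},
\end{equation*}
and in particular $K>0$ forces $f''>0$ (as $r,f'>0$).

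Writing $\vec v=(v_1,v_2,v_3)$ and expanding $\langle N,\vec v\rangle$ with the metric $\langle,\rangle=dx^2+dy^2-dz^2$, equation \eqref{k1} becomes
\begin{equation*}
\left(\frac{f''}{8rf'^2}\right)^{\alpha}=\pm\frac{1}{2\sqrt{f'}}\Bigl[(v_2-v_3)\,t^2-2v_1\,t+\bigl((v_2-v_3)f'-(v_2+v_3)\bigr)\Bigr].
\end{equation*}
The left-hand side is independent of $t$, so the right-hand side is a polynomial in $t$ of degree at most $2$ whose coefficients of $t^2$ and of $t$ must both vanish; since $f'>0$ the factor $1/(2\sqrt{f'})$ is nowhere zero, hence $v_2-v_3=0$ and $v_1=0$. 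Thus $\vec v=v_2(e_2+e_3)$ is parallel to $L$, which is the assertion. (The degenerate possibility $f''\equiv 0$ is excluded because it would give $K\equiv 0$.)

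All the computations here are elementary; the only point that needs care is keeping the Lorentzian cross product and the sign conventions straight when forming $N$ and $\langle N,\vec v\rangle$, because the parabolic rotation about a lightlike axis mixes the $y$- and $z$-coordinates and the third slot of the inner product carries the minus sign. This bookkeeping is where I expect slips to be most likely, but it is not a genuine obstacle: none of the conclusions depends on the overall sign of $N$, and the two coefficient equations $v_2=v_3$ and $v_1=0$ come out regardless.
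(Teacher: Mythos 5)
Your proposal is correct and follows essentially the same route as the paper: compute the unit normal and the Gauss curvature for the parametrization \eqref{p3}, substitute into \eqref{k1}, and observe that the right-hand side is a quadratic polynomial in $t$ whose $t^2$- and $t$-coefficients must vanish, giving $v_2=v_3$ and $v_1=0$. The extra first-fundamental-form and determinant computations you include are consistent with the paper's stated formulas for $N$ and $K$, so there is nothing to correct.
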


\begin{proof} Up to a rigid motion of $\r_1^3$, we can choose the axis to be $L=\mbox{sp}\{(0,1,1)\}$ and the parametrization of the surface is given by \eqref{p3} with $f'>0$. The Gauss curvature is 
$$K=\frac{f''}{8r f'^2}$$
and the unit normal vector field is 
$$N=\frac{1}{2\sqrt{f'}}(-2t,-1+t^2+f',1+t^2+f').$$
Thus, if $\vec{v}=(v_1,v_2,v_3)$,  the equation  \eqref{k1} is
\begin{equation*}
\left(\frac{f''}{8r f'^2}\right)^\alpha=\frac{1}{2\sqrt{f'}}(-2tv_1+(-1+t^2+f')v_2-(1+t^2+f')v_3 ).
\end{equation*}
This is a polynomial equation of degree $2$ on the variable $t$ and the coefficients are functions on $r$, in particular, these coefficients must vanish. The coefficient of $t^2$ yields $v_2=v_3$ and of $t$, $v_1=0$, proving the result.
\end{proof}

 Once we have proved this proposition, the right-hand side of \eqref{k1} is $\langle N,\vec{v}\rangle=-v_2/\sqrt{f'}$, in particular, $v_2<0$. Without loss of generality, we assume that $\vec{v}=(0,-1,-1)$.  Then \eqref{k1} is
$$\left(\frac{f''}{8r f'^2}\right)^\alpha=\frac{1}{\sqrt{f'}},$$
or equivalently, 
\begin{equation*} 
\frac{f''}{f'^{\frac{4\alpha-1}{2\alpha}}}=8r.
\end{equation*}
Integrating,
\begin{equation} \label{der-l}
f'(r)=\left\{
\begin{array}{ll}
 me^{4r^2},m>0,  & \alpha =\frac{1}{2} \\
\left (\frac{2(1-2\alpha)}{\alpha}r^2+m \right )^{\frac{2\alpha}{1-2\alpha}}, m\in\r ,& \alpha \neq \frac{1}{2}.%
\end{array}%
\right.
\end{equation}%
From \eqref{der-l}, we find the domain of $f$. On the other hand, the spacelike condition $f'(r)>0$ leaves to be satisfied when $f'(r)=0$. This occurs in the case $\alpha\in (0,1/2)$ and $m<0$. Indeed, we have 
$$\lim_{r\to \sqrt{\frac{\alpha}{2(2\alpha -1)}m}}f'(r)=0.$$
This solution can be extended to the left of the domain by means of a rotational timelike $K^\alpha$-translator with the same rotation axis.

\begin{theorem}\label{clas-3}  Any rotational $K^\alpha$-translator   with lightlike axis parametrizes as \eqref{p3}, where 
\begin{equation}\label{int-light}
f(r) =\left\{
\begin{array}{lll}
 m\int^{r} e^{4t^2} dt, m>0,& &\alpha =\frac{1%
}{2} \\
 \int^{r} \left (\frac{2(1-2\alpha)}{\alpha}t^2+m \right )^{\frac{2\alpha}{1-2\alpha}} dt,m\in \r, & &\alpha \neq \frac{%
1}{2}.%
\end{array}%
\right.
\end{equation}
The maximal  domain of the above function $f(r)$ is: 
\begin{enumerate}
\item Case $\alpha=1/2$. The domain is $(0, \infty)$.
\item Case $\alpha\in (0,1/2)$. The domain is $(0,\infty)$  if $m\geq 0$ or $(\sqrt{\frac{\alpha}{2(2\alpha -1)}m}, \infty)$  if $m<0$. 
\item  Case $\alpha\not\in [0,1/2]$. The domain is $(0,\sqrt{\frac{\alpha}{2(2\alpha-1)}m})$ where $m>0$.
\end{enumerate}
\end{theorem}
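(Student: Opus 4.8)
The plan is to reduce the ODE \eqref{k1} to a first-order equation for $f'$ exactly as in the previous two subsections, integrate once, and then extract the maximal domain by analyzing where the integrand is defined and positive. First I would record the geometric data for the parametrization \eqref{p3}: the Gauss curvature $K=f''/(8rf'^2)$ and the unit normal $N$ computed in Proposition \ref{p-light}. Substituting the speed $\vec{v}=(0,-1,-1)$, which is legitimate after that proposition and the sign discussion, equation \eqref{k1} collapses to $(f''/(8rf'^2))^\alpha = 1/\sqrt{f'}$. Raising to the power $1/\alpha$ and separating, this is $f''\, f'^{-(4\alpha-1)/(2\alpha)} = 8r$, which integrates to \eqref{der-l}: the exponential branch $f'=me^{4r^2}$ when $\alpha=1/2$, and $f'=\big(\tfrac{2(1-2\alpha)}{\alpha}r^2+m\big)^{2\alpha/(1-2\alpha)}$ otherwise. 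A further integration gives the stated formula \eqref{int-light} for $f$. These are routine once the setup is in place, so I would present them compactly.

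The substance of the statement is the determination of the maximal domain, so next I would go case by case on $\alpha$. For $\alpha=1/2$, $f'=me^{4r^2}>0$ for all $r>0$ with $m>0$, so $f$ is defined on $(0,\infty)$ and there is nothing to obstruct extension; that gives case (1). For the power branch, write $g(r)=\tfrac{2(1-2\alpha)}{\alpha}r^2+m$ and $f'(r)=g(r)^{2\alpha/(1-2\alpha)}$, and note that we need $f'>0$ for the surface to be spacelike (since $EG-F^2=16r^2f'$ by Section \ref{sec2}), which forces $g(r)>0$. When $\alpha\in(0,1/2)$ the coefficient $\tfrac{2(1-2\alpha)}{\alpha}$ is positive and the exponent $2\alpha/(1-2\alpha)$ is positive, so $g(r)>0$ automatically if $m\ge 0$, giving domain $(0,\infty)$; if $m<0$ then $g(r)>0$ exactly for $r>\sqrt{\tfrac{\alpha}{2(2\alpha-1)}m}$ (rewriting $-m/\big(\tfrac{2(1-2\alpha)}{\alpha}\big)=\tfrac{\alpha}{2(2\alpha-1)}m$ since both numerator and denominator flip sign), and at that left endpoint $g\to 0^+$ with positive exponent forces $f'\to 0$, exactly the degeneration announced before the theorem. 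When $\alpha\notin[0,1/2]$ the coefficient $\tfrac{2(1-2\alpha)}{\alpha}$ is negative, so $g$ is decreasing in $r^2$; for $g>0$ we need $m>0$ and $r^2<m\big/\big(\tfrac{2(2\alpha-1)}{\alpha}\big)=\tfrac{\alpha}{2(2\alpha-1)}m$, i.e. the domain is the bounded interval $(0,\sqrt{\tfrac{\alpha}{2(2\alpha-1)}m})$; this handles case (3). Assembling these three computations yields exactly the tabulated domains.

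I do not expect a serious obstacle here: the only delicate points are bookkeeping of signs (the factors $1-2\alpha$, $2\alpha-1$, $\alpha$ all change sign across the thresholds $\alpha=0$ and $\alpha=1/2$, so one must be careful that $\sqrt{\tfrac{\alpha}{2(2\alpha-1)}m}$ is real exactly in the regime claimed) and checking that the apparent singularity of $K=f''/(8rf'^2)$ at $r\to 0$ is harmless, which follows because $f'(0)=m>0$ (or $f'(0)=g(0)^{2\alpha/(1-2\alpha)}=m^{2\alpha/(1-2\alpha)}$ finite and positive) in the cases where $0$ is a genuine endpoint, and because the surface is parametrized away from the axis by construction. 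I would also remark, mirroring the commentary after Theorems \ref{clas-1} and \ref{clas-2}, that in the case $\alpha\in(0,1/2)$, $m<0$ the left endpoint is where the surface ceases to be spacelike (here because $f'\to 0$ rather than $f'^2\to 1$, reflecting the lightlike-axis formula $EG-F^2=16r^2f'$), and that this solution is continued by a rotational timelike $K^\alpha$-translator with the same axis in Section \ref{time}. No new environments or macros beyond those already in the paper are needed.
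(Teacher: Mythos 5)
Your proposal is correct and follows essentially the same route as the paper: reduce \eqref{k1} to $f''\,f'^{-(4\alpha-1)/(2\alpha)}=8r$ using the normal and curvature from Proposition \ref{p-light}, integrate to get \eqref{der-l}, and read off the maximal domain from positivity of the base $\frac{2(1-2\alpha)}{\alpha}r^2+m$ together with the spacelike condition $f'>0$. The sign bookkeeping in all three cases matches the paper's tabulated intervals, and your remark on the degeneration $f'\to 0$ at the finite endpoint when $\alpha\in(0,1/2)$, $m<0$ reproduces the paper's own commentary.
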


\begin{corollary} \label{cor-light}
Rotational   $K^{1/4}$-translators with lightlike axis parametrize as  \eqref{p3}, where
$$f(r)=\frac{4}{3}r^3+mr,$$
where $r \in (0,\infty)$ if $m >  0$ and $r \in (\sqrt{-m}/2,\infty)$ otherwise.
\end{corollary}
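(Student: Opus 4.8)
The plan is to specialize Theorem \ref{clas-3} to the value $\alpha=1/4$ and carry out the single integration in \eqref{int-light} explicitly. Setting $\alpha=1/4$ in the second branch of \eqref{der-l}, the exponent becomes $\frac{2\alpha}{1-2\alpha}=\frac{1/2}{1/2}=1$, so the integrand collapses from a power to the affine function
\begin{equation*}
f'(r)=\frac{2(1-2\alpha)}{\alpha}r^2+m=\frac{2\cdot(1/2)}{1/4}r^2+m=4r^2+m.
\end{equation*}
Thus $f'$ is a polynomial, which is exactly the simplification announced in the text preceding the statement (``the terms involving $1-f'^2$ disappear''). Integrating once gives $f(r)=\frac{4}{3}r^3+mr+c$, and absorbing the constant of integration $c$ into a vertical translation of the surface (a rigid motion of $\r_1^3$ that preserves the lightlike axis), we may take $c=0$, yielding $f(r)=\frac{4}{3}r^3+mr$.

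Next I would determine the domain from the spacelike constraint $f'(r)>0$, i.e. $4r^2+m>0$, combined with $r>0$ inherited from the parametrization \eqref{p3}. If $m\geq 0$ then $4r^2+m>0$ automatically for all $r>0$ (note $m=0$ still gives $f'(r)=4r^2>0$ for $r>0$), so the maximal domain is $(0,\infty)$; this matches item (2) of Theorem \ref{clas-3} with $\alpha=1/4\in(0,1/2)$ and $m\geq 0$. If $m<0$, then $4r^2+m>0$ forces $r>\sqrt{-m}/2$, so the domain is $(\sqrt{-m}/2,\infty)$, and one checks this agrees with the general formula $(\sqrt{\frac{\alpha}{2(2\alpha-1)}m},\infty)$ by substituting $\alpha=1/4$: $\frac{\alpha}{2(2\alpha-1)}=\frac{1/4}{2\cdot(-1/2)}=-\frac{1}{4}$, so the left endpoint is $\sqrt{-m/4}=\sqrt{-m}/2$, as claimed. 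At $r=\sqrt{-m}/2$ we indeed have $f'(r)=0$, consistent with the remark that the surface there ceases to be spacelike and can be continued by a timelike $K^\alpha$-translator.

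Finally I would note that the third branch of Theorem \ref{clas-3} (the case $\alpha\notin[0,1/2]$) does not apply since $1/4\in(0,1/2)$, and the case $\alpha=1/2$ branch does not apply either, so items (1)–(2) of the present corollary exhaust all the possibilities and the formula $f(r)=\frac{4}{3}r^3+mr$ is the complete answer. There is essentially no obstacle here: the only point requiring a word of care is the harmless use of a vertical translation to eliminate the integration constant, and the bookkeeping to confirm that the endpoint $\sqrt{-m}/2$ really is the specialization of the general domain endpoint in Theorem \ref{clas-3}. The rest is a direct substitution, which is precisely why $\alpha=1/4$ is singled out as the ``distinguished case'' throughout the paper.
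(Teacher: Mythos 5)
Your proposal is correct and follows exactly the route the paper intends: specialize \eqref{der-l} (equivalently \eqref{int-light}) to $\alpha=1/4$, where the exponent $\tfrac{2\alpha}{1-2\alpha}$ equals $1$ so that $f'(r)=4r^2+m$ integrates to $\tfrac{4}{3}r^3+mr$, and read off the domain from item (2) of Theorem \ref{clas-3}, confirming $\sqrt{-m/4}=\sqrt{-m}/2$. The only cosmetic remark is that the translation absorbing the integration constant is along the lightlike axis direction $(0,1,1)$ rather than a ``vertical'' one, but this is immaterial.
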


Again, for some   particular values of $\alpha$ and the constant $m$,  we can integrate  \eqref{int-light}. For example,  
$$f_1(r)=\pm  \frac{\sqrt{2} \tanh ^{-1}\left(\frac{\sqrt{2} r}{\sqrt{m}}\right)}{4m^{3/2}}+\frac{ r}{2m^2-4 m r^2},$$
and for any $\alpha \in (0,1/2)$ with   $m=0$,  
$$ f_{\alpha}(r)=\frac{1-2\alpha}{1+2\alpha}\left ( \frac{2(1-2\alpha)}{\alpha} \right )^{\frac{2\alpha}{1-2\alpha}} r^{\frac{1+2\alpha}{1-2\alpha}}.$$

\section{Rotational timelike $K^\alpha$-translators} \label{time}

In this section, we study the rotational timelike surfaces of $\r_1^3$ that satisfy the equation \eqref{k1}. Our first result is the relation between the speed vector $\vec{v}$ and the rotational axis.

\begin{proposition} \label{p-timelike}
If  $\Sigma $ is a rotational timelike $K^{\alpha }$-translator, then $\vec{v}$ is parallel to the rotation axis.  
\end{proposition}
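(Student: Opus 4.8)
The plan is to mimic the three propositions already proved (Propositions \ref{p-tim1}, \ref{p-spa1}, \ref{p-light}), but now carrying all three cases simultaneously since "rotational timelike $K^\alpha$-translator" does not yet specify the causal character of the axis. So I would split the proof into the three subcases: timelike axis, spacelike axis, lightlike axis, and in each subcase exhibit the parametrization, compute $N$ and $K$, substitute into \eqref{k1}, and exploit linear independence of the angular functions.

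First, for the \emph{timelike axis} case, the surface is parametrized by \eqref{p1}, but now with $f'^2>1$ (the timelike condition from Section \ref{sec2}). The unit normal is $N=\frac{1}{\sqrt{f'^2-1}}(\cos\theta\, f',\sin\theta\, f',1)$ up to sign, and writing $\vec{v}=(v_1,v_2,v_3)$, equation \eqref{k1} becomes $P_0(r)+P_1(r)\cos\theta+P_2(r)\sin\theta=0$ with $P_1,P_2$ proportional to $v_1 f'$ and $v_2 f'$ respectively. Since $\{1,\cos\theta,\sin\theta\}$ are linearly independent, $P_1=P_2=0$; if $f'\neq 0$ somewhere then $v_1=v_2=0$, and if $f'\equiv 0$ the surface degenerates ($K=0$, or it fails to be timelike), which is excluded. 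For the \emph{spacelike axis} case I would do the same with parametrization \eqref{p21} under $f'^2<1$ (the timelike regime), or \eqref{p22} which is always timelike; in either case $N$ involves $\sinh\theta,\cosh\theta$ and the equation \eqref{k1} decomposes against $\{1,\cosh\theta,\sinh\theta\}$, which are again linearly independent, forcing the components of $\vec v$ orthogonal to the axis to vanish. For the \emph{lightlike axis} case I would use \eqref{p3} with $f'<0$ (timelike regime); then \eqref{k1} becomes a degree-two polynomial identity in $t$ whose coefficients are functions of $r$, so all coefficients vanish; the coefficient of $t^2$ gives $v_2=v_3$ and the coefficient of $t$ gives $v_1=0$, which says exactly that $\vec v$ is a multiple of $e_2+e_3$, the direction of the axis.

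The main obstacle, such as it is, is bookkeeping: one must be careful that in each timelike regime the relevant radicand ($f'^2-1$, $1-f'^2$, or $-f'$) is positive so that $N$ and $K$ are genuinely well-defined, and one must handle the degenerate alternative $f'\equiv\text{const}$ (or $f'\equiv 0$) in the first two cases separately, observing that then either $K=0$ or the induced metric degenerates, contradicting the hypothesis that $\Sigma$ is a timelike $K^\alpha$-translator with $K\neq 0$. In the lightlike case there is no such alternative to rule out, because the vanishing of the $t^2$- and $t$-coefficients already yields the conclusion without needing $f'\neq 0$ at a point. I expect the proof to be essentially a three-fold repetition of the computations already carried out, so I would present it compactly, referring back to the expressions of $N$ and $K$ established in the corresponding spacelike propositions and only noting the sign change in the relevant $EG-F^2$ factor.

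I would conclude:

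\begin{proof}
We argue according to the causal character of the rotation axis.

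If the axis is timelike, after a rigid motion we take it to be the $z$-axis, so $\Sigma$ is parametrized by \eqref{p1} with $f'^2>1$. As in the proof of Proposition \ref{p-tim1}, the unit normal is $N=\frac{1}{\sqrt{f'^2-1}}(f'\cos\theta,f'\sin\theta,1)$ and the Gauss curvature is a function of $r$ alone, hence writing $\vec v=(v_1,v_2,v_3)$ the equation \eqref{k1} has the form $P_0(r)+P_1(r)\cos\theta+P_2(r)\sin\theta=0$ with $P_1$ and $P_2$ proportional to $v_1 f'$ and $v_2 f'$. By linear independence of $\{1,\cos\theta,\sin\theta\}$ all $P_k$ vanish; if $f'\neq 0$ at some point then $v_1=v_2=0$, and if $f'\equiv 0$ the surface is a plane with $K=0$, which is excluded. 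Thus $\vec v$ is parallel to the axis.

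If the axis is spacelike, we take it to be the $x$-axis, so $\Sigma$ is parametrized by \eqref{p21} with $f'^2<1$, or by \eqref{p22}. In the first case $N=\frac{1}{\sqrt{1-f'^2}}(1,f'\sinh\theta,f'\cosh\theta)$ and $K$ depends only on $r$, so \eqref{k1} has the form $P_0(r)+P_1(r)\cosh\theta+P_2(r)\sinh\theta=0$ with $P_1,P_2$ proportional to $v_2 f'$ and $v_3 f'$; linear independence of $\{1,\cosh\theta,\sinh\theta\}$ forces $v_2=v_3=0$ unless $f'\equiv 0$, in which case $K=0$, which is impossible. In the second case one argues identically with $\cosh\theta,\sinh\theta$ interchanged. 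Hence $\vec v$ is parallel to the axis.

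Finally, if the axis is lightlike we take $L=\mbox{sp}\{(0,1,1)\}$, so $\Sigma$ is parametrized by \eqref{p3} with $f'<0$. As in the proof of Proposition \ref{p-light}, the unit normal is $N=\frac{1}{2\sqrt{-f'}}(-2t,-1+t^2+f',1+t^2+f')$ and $K$ depends only on $r$, so \eqref{k1} becomes a polynomial identity of degree two in $t$ whose coefficients are functions of $r$. The coefficient of $t^2$ gives $v_2=v_3$ and the coefficient of $t$ gives $v_1=0$; therefore $\vec v=v_2(0,1,1)$ is parallel to the axis.
\end{proof}
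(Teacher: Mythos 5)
Your proof is correct and follows exactly the route the paper takes: the paper's own proof of Proposition \ref{p-timelike} simply states that one repeats the arguments of Propositions \ref{p-tim1}, \ref{p-spa1} and \ref{p-light} with the sign of $1-f'^2$ (resp.\ $f'$) reversed, and handles \eqref{p22} analogously, which is precisely what you carry out in detail. The only cosmetic remark is that in the timelike-axis case the alternative $f'\equiv 0$ is already excluded by the timelike condition $f'^2>1$, so that degenerate branch need not be discussed there.
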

\begin{proof}
The proof follows the same steps as Propositions \ref{p-tim1}, \ref{p-spa1} and \ref{p-light} because the expression of $K$ is the same that in the spacelike case with the  difference that now $1-f'^2<0$ in \eqref{p1}, $1-f'^2>0$ in  \eqref{p21} and $f'<0$ in \eqref{p3}. In case that the parametrization of the surface is \eqref{p22}, the argument is analogous.
\end{proof}

We now give the classification of the rotational timelike $K^\alpha$-translators. The proofs are similar to those of  Section \ref{spac}, so our previous investigations may be adapted in the present case.  We will omit the details.

\subsection{The axis is timelike}

We suppose that the speed is $\vec{v}=(0,0,1)$.  Now $f'^2>1$ and equation  \eqref{k1} is 
$$\left(-\frac{f'f''}{r(f'^2-1)^2}\right)^\alpha=\frac{1}{\sqrt{f'^2-1}}.$$
Using the auxiliary function  $h=1/\sqrt{f'^2-1}$ and integrating, we have 
 \begin{equation}  \label{fts}
f'(r)=\left\{
\begin{array}{ll}
\pm\left(1+\frac{1}{m^2}e^{-r^2 }\right)^{1/2},  & \alpha =\frac{1}{2} \\
\pm\left(1+\left(m+\frac{2 \alpha -1}{2 \alpha }r^2\right)^{\frac{2 \alpha }{1-2 \alpha }}\right)^{1/2},& \alpha \neq \frac{1}{2}.%
\end{array}%
\right.
\end{equation}%

\begin{theorem} \label{clas-4}
 Any rotational timelike $K^\alpha$-translator   with timelike axis parametrizes as \eqref{p1}, where
\begin{equation} 
f(r) =\left\{
\begin{array}{lll}
\pm \int^{r}\left( 1+ \frac{1}{m^2}e^{-t^2}\right) ^{1/2}\, dt, m>0,& &\alpha =\frac{1%
}{2} \\
\pm \int^{r}\left( 1+ \left( m+\frac{2\alpha-1 }{2\alpha }t^2\right) ^{\frac{%
2\alpha }{1-2\alpha }}\right) ^{1/2}\, dt,m\in \r, & &\alpha \neq \frac{%
1}{2}.%
\end{array}%
\right.
\end{equation}
The maximal domain of $f(r)$ is:
\begin{enumerate}
\item Case $\alpha=1/2$. The domain is $(0,\infty)$.  
\item Case $\alpha\in (0,1/2)$. Then $m>0$ and the domain is $(0,\sqrt{\frac{2\alpha}{1-2\alpha}m}).$
\item Case $\alpha\not\in [0,\frac12]$. The domain is $(0,\infty)$ if $m\geq 0$ or $(\sqrt{\frac{2\alpha}{1-2\alpha}m},\infty)$ if $m < 0$.
 \end{enumerate}
\end{theorem}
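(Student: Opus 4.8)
The plan is to mirror exactly the computation already carried out in Subsection \ref{ss1} for spacelike rotational $K^\alpha$-translators with timelike axis, tracking the single sign change that distinguishes the timelike case, namely that now $f'^2>1$ rather than $f'^2<1$. First I would record that, by Proposition \ref{p-timelike}, the speed $\vec v$ is parallel to the timelike rotation axis, so after a rigid motion we may take $\vec v=(0,0,1)$ and parametrize $\Sigma$ by \eqref{p1} with the timelike condition $f'^2>1$; here $EG-F^2=r^2(1-f'^2)<0$ as required. The Gauss curvature computation from the proof of Proposition \ref{p-tim1} carries over with the same formula structure, giving $K=-f'f''/(r(f'^2-1)^2)$ (the sign of $1-f'^2$ has flipped inside the square), and the prescribing-curvature equation \eqref{k1} reduces, just as in the spacelike case, to
$$\left(-\frac{f'f''}{r(f'^2-1)^2}\right)^\alpha=\frac{1}{\sqrt{f'^2-1}}.$$

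Next I would introduce $h=1/\sqrt{f'^2-1}$, which is the natural substitution here (in contrast to $h=1/\sqrt{1-f'^2}$ in the spacelike timelike-axis case), and compute $h'$ in terms of $f',f''$. One finds $hh'=-f'f''/(f'^2-1)^2$, so the ODE becomes $(-hh'/r)^\alpha=h$, i.e. $h^{(\alpha-1)/\alpha}h'=-r$. Wait — I should be careful with the sign: since the relation between $h$ and $f'$ differs by the interchange $1-f'^2\leftrightarrow f'^2-1$, the derivative picks up the opposite sign compared to Subsection \ref{ss1}, and in fact one gets $h^{(\alpha-1)/\alpha}h'=-r$ exactly as before (the reader can check that the two sign flips, one in $K$ and one in $h'$, cancel). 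Integrating this separable ODE gives $h$: for $\alpha=1/2$, $h=m\,e^{-r^2/2}$ with $m>0$; for $\alpha\neq 1/2$, $h=(m-\frac{2\alpha-1}{2\alpha}r^2)^{\alpha/(2\alpha-1)}$ — hmm, but the stated answer \eqref{fts} has $f'^2=1+\frac{1}{m^2}e^{-r^2}$ for $\alpha=1/2$, which corresponds to $h=\frac{1}{\sqrt{f'^2-1}}=m e^{r^2/2}$, not $m e^{-r^2/2}$. So actually the correct sign of the integrated equation must be $h^{(\alpha-1)/\alpha}h'=+r$, matching the spacelike \emph{spacelike}-axis case \eqref{k3} rather than the timelike-axis one. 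I would therefore double-check the sign of $h'$ carefully: with $h=(f'^2-1)^{-1/2}$ one has $h'=-f'f''(f'^2-1)^{-3/2}$, hence $-hh'=f'f''(f'^2-1)^{-2}\cdot(f'^2-1)^{-1/2}\cdot\ldots$; the upshot, after substituting into $(-f'f''/(r(f'^2-1)^2))^\alpha=h$, is $h^{(\alpha-1)/\alpha}h'=r$, giving $h=m e^{r^2/2}$ ($\alpha=1/2$) and $h=(m+\frac{2\alpha-1}{2\alpha}r^2)^{\alpha/(2\alpha-1)}$ ($\alpha\neq 1/2$), in agreement with \eqref{fts}. Inverting $h$ yields $f'^2=1+h^{-2}$, which is the content of \eqref{fts}, and a final integration produces the displayed formula for $f(r)$.

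Finally I would determine the maximal domain in each of the three cases by reading off where the expression under the outer square root in \eqref{fts} is positive and where $h$ (equivalently $h^{-2}$) remains well-defined. For $\alpha=1/2$ the factor $e^{-r^2}/m^2$ is positive for all $r>0$, so the domain is $(0,\infty)$. For $\alpha\in(0,1/2)$ the exponent $\frac{2\alpha}{1-2\alpha}$ is positive and $\frac{2\alpha-1}{2\alpha}<0$, so $m+\frac{2\alpha-1}{2\alpha}r^2$ must stay positive, forcing $m>0$ and $r<\sqrt{\frac{2\alpha}{1-2\alpha}m}$; at that right endpoint $h\to 0$ so $f'^2\to\infty$, and (as already noted in Subsection \ref{ss1} and Subsection \ref{ss2}) this is where the surface degenerates and can be matched across — here it is the timelike piece that is being extended. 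For $\alpha\notin[0,1/2]$ the exponent $\frac{2\alpha}{1-2\alpha}$ is negative, so we need $m+\frac{2\alpha-1}{2\alpha}r^2\neq 0$ and of the correct sign; since $\frac{2\alpha-1}{2\alpha}>0$ here, the quantity $m+\frac{2\alpha-1}{2\alpha}r^2$ is positive for all $r>0$ when $m\geq 0$, giving domain $(0,\infty)$, while for $m<0$ it is positive only for $r>\sqrt{\frac{2\alpha}{1-2\alpha}m}$ — note $\frac{2\alpha}{1-2\alpha}<0$ and $m<0$ so the radicand is positive — giving domain $(\sqrt{\frac{2\alpha}{1-2\alpha}m},\infty)$. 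The only genuinely delicate point, and the one I would slow down on, is getting the signs right in the $h$-substitution so that one indeed lands on the spacelike-axis ODE $h^{(\alpha-1)/\alpha}h'=r$ rather than its negative; everything else is a routine transcription of the argument in Section \ref{spac} together with an elementary case analysis of when a quadratic-in-$r$ expression raised to a (possibly negative) power is defined and positive.
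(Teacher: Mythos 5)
Your proposal is correct and follows essentially the same route as the paper, which omits the details of this proof and simply refers back to the method of Section \ref{spac}: substitute $h=(f'^2-1)^{-1/2}$, reduce \eqref{k1} to $h^{(\alpha-1)/\alpha}h'=r$, integrate, and read off the domain from the sign of the base $m+\frac{2\alpha-1}{2\alpha}r^2$; your sign-checking correctly lands on the spacelike-axis ODE and your domain analysis in all three cases matches the statement. One slip worth correcting: in case $\alpha\in(0,1/2)$, at the right endpoint the base tends to $0^+$ and the exponent $\alpha/(2\alpha-1)$ is negative, so $h\to+\infty$ and $f'^2=1+h^{-2}\to 1$ (not $h\to 0$, $f'^2\to\infty$ as you wrote); this is exactly the degeneration $\lim_{r\to r_0}f'(r)^2=1$ that the paper uses to glue the timelike piece to the spacelike translator of Theorem \ref{clas-1}, and it does not affect the maximal domain you obtained.
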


Such as it was announced in Section \ref{spac}, we can extend some rotational $K^\alpha$-translators with timelike axis by means of timelike $K^\alpha$-translators with the same axis. This occurs in the case $\alpha\in (0,1/2)$ of Theorem \ref{clas-4}  when $m>0$. At the endpoint $r_0=\sqrt{\frac{2\alpha}{1-2\alpha}m}$, we have $\lim_{r\to r_0}f'(r)^2=1$. Then we can extend $f$ to the right of  the value $r_0$ with the rotational $K^\alpha$-translator with the same rotation axis that appeared in Theorem \ref{clas-1} but with reverse sign of $m$.

Using the same argument as in Corollary \ref{exist-12}, and using \eqref{fts}, we have: 

\begin{corollary}  
There are not rotational timelike $K^{\alpha}$-translators with timelike axis and  intersecting orthogonally  the rotation axis.  
\end{corollary}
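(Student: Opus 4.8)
The plan is to mimic the proof of Corollary~\ref{exist-12} verbatim, since the equation \eqref{k1} for a timelike surface with timelike axis produces essentially the same first integral as in the spacelike case, only with the sign of the $1-f'^2$ term flipped. Concretely, the orthogonality of the generating curve $r\mapsto(r,0,f(r))$ with the rotation axis $L=\mathrm{sp}\{e_3\}$ at $r=0$ means that the tangent vector $(1,0,f'(r))$ becomes orthogonal to $e_3$ in the limit, i.e. $\langle(1,0,f'(0)),e_3\rangle_1=-f'(0)=0$, so the orthogonality condition is $\lim_{r\to 0}f'(r)=0$. The whole corollary reduces to showing this limit is never $0$.

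First I would invoke the formula \eqref{fts} for $f'(r)$, which holds on the maximal domain described in Theorem~\ref{clas-4}. In every case the right endpoint of the domain has $0$ in its closure, so the limit $\lim_{r\to 0}f'(r)$ makes sense. Then I would simply evaluate: for $\alpha=1/2$ one gets $\lim_{r\to 0}f'(r)=\pm(1+1/m^2)^{1/2}$ with $m>0$, which is $\geq \sqrt 2 >0$; and for $\alpha\neq 1/2$ one gets $\lim_{r\to 0}f'(r)=\pm(1+m^{2\alpha/(1-2\alpha)})^{1/2}$, which is again strictly positive since the quantity under the root is $1+(\text{nonnegative})\geq 1$ (here $m>0$ in the range $\alpha\in(0,1/2)$, and $m\geq 0$ or the constraint $m<0$ combined with the domain $(\sqrt{2\alpha m/(1-2\alpha)},\infty)$ keeps $r$ away from $0$ in the case $\alpha\notin[0,1/2]$ with $m<0$, so that subcase does not even reach the axis). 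In all cases the limit is bounded below by $1$ in absolute value, hence nonzero, contradicting the orthogonality condition. That yields the non-existence.

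The only point requiring a little care — and the one I expect to be the mild obstacle — is bookkeeping the ranges of $m$ in each case of Theorem~\ref{clas-4} and checking that whenever the generating curve actually limits onto the axis at $r=0$ the expression for $\lim_{r\to 0}f'(r)$ is the one I wrote; in the subcase $\alpha\notin[0,1/2]$, $m<0$ the curve never reaches the axis at all, so there is nothing to prove there. Apart from that, the argument is a one-line limit computation, exactly parallel to Corollary~\ref{exist-12}, and no new idea is needed. I would write the proof in two sentences: state that orthogonality forces $\lim_{r\to0}f'(r)=0$, then display the two limit values from \eqref{fts} and observe neither can vanish.
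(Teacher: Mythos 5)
Your proposal is correct and follows exactly the paper's route: the paper proves this corollary by the same one-line argument as Corollary \ref{exist-12}, namely computing $\lim_{r\to 0}f'(r)$ from \eqref{fts} and observing that the quantity under the square root is at least $1$, so the limit can never vanish. Your extra bookkeeping of the $m$-ranges (including noting that the subcase $\alpha\notin[0,1/2]$, $m<0$ never reaches the axis) is sound and slightly more explicit than the paper, which leaves those details implicit.
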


\begin{corollary}
Rotational  timelike  $K^{1/4}$-translators with   timelike axis parametrize as  \eqref{p1}, where
$$f(r)= \pm \frac{1}{2} \left(r \sqrt{1+m-r^2}+(1+m) \tan ^{-1}\left(\frac{r}{\sqrt{1+m-r^2}}\right)\right),$$
with $r\in (0,\sqrt{1+m})$ and $m>-1$.  
\end{corollary}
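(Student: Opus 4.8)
The plan is to specialize Theorem \ref{clas-4} to the exponent $\alpha=\tfrac14$. The first step is to record the two arithmetic simplifications that occur at this value: $\frac{2\alpha-1}{2\alpha}=-1$ and $\frac{2\alpha}{1-2\alpha}=1$. Substituting these into the formula \eqref{fts} for $f'$ in the case $\alpha\neq\tfrac12$, the bracketed power collapses to first order and one obtains simply
\[
f'(r)=\pm\sqrt{1+(m-r^2)}=\pm\sqrt{1+m-r^2}.
\]
This is, in the timelike-axis timelike-surface situation, the manifestation of the phenomenon already noted in Section \ref{spac}: at $\alpha=\tfrac14$ the reduced ODE loses the transcendental $(f'^2-1)$-dependence and becomes elementary.

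The second step is the integration. The classical antiderivative $\int\sqrt{a^2-r^2}\,dr=\tfrac12\big(r\sqrt{a^2-r^2}+a^2\sin^{-1}(r/a)\big)$, taken with $a^2=1+m$ and combined with the identity $\sin^{-1}(r/a)=\tan^{-1}\!\big(r/\sqrt{a^2-r^2}\big)$ valid on $0<r<a$, gives
\[
f(r)=\pm\tfrac12\Big(r\sqrt{1+m-r^2}+(1+m)\tan^{-1}\!\big(\tfrac{r}{\sqrt{1+m-r^2}}\big)\Big),
\]
which is the asserted closed form; the $\pm$ records the two signs of $f'$, equivalently the reflection $z\mapsto-z$. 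I would then differentiate this expression to confirm $f'(r)^2=1+m-r^2$, which also checks that the additive constant has been normalized to $0$ as in the statement.

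The third step is the domain. The radicand $1+m-r^2$ must be nonnegative and $r>0$, which yields exactly $r\in(0,\sqrt{1+m})$ and forces $m>-1$ for the interval to be nonempty; this is the maximal interval on which the generating function $f$ is defined. For consistency with Theorem \ref{clas-4} I would also remark that the surface is genuinely timelike only on the subinterval where $f'^2>1$, i.e.\ for $0<r<\sqrt{m}$ (so that $m>0$ is needed for a nonempty timelike piece), while on $\sqrt{m}<r<\sqrt{1+m}$ the same $f$ produces a spacelike piece — in agreement with the extension remarks of Sections \ref{spac} and \ref{time}.

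I do not expect a genuine obstacle: once Theorem \ref{clas-4} is granted, the argument is an elementary integration. The only points needing care are the bookkeeping of the two exponents at $\alpha=\tfrac14$ (so that the radicand is really the quadratic $1+m-r^2$ and not an expression with a fractional power), the passage between the $\sin^{-1}$ and $\tan^{-1}$ primitives, and stating the interval of definition precisely together with the caveat about where the causal character is actually timelike.
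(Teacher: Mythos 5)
Your proposal is correct and follows exactly the route the paper intends (the paper omits the proof of this corollary): at $\alpha=\tfrac14$ the exponents collapse so that \eqref{fts} gives $f'(r)=\pm\sqrt{1+m-r^2}$, and the standard antiderivative of $\sqrt{a^2-r^2}$ with $a^2=1+m$, rewritten via $\sin^{-1}(r/a)=\tan^{-1}\bigl(r/\sqrt{a^2-r^2}\bigr)$, yields the stated closed form on $(0,\sqrt{1+m})$. Your additional remark that the surface is genuinely timelike only where $f'^2>1$, i.e.\ on $(0,\sqrt{m})$ with $m>0$ (consistent with the domain in Theorem \ref{clas-4} for $\alpha\in(0,1/2)$), is a correct and worthwhile precision that the corollary's statement of the domain glosses over.
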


\subsection{The axis is spacelike}

We assume that the rotation axis is $(1,0,0)$. We separate the two parametrizations. First, consider the parametrization \eqref{p21} with  $f'^2<1$. As a consequence of Proposition \ref{p-timelike},  we can assume $\vec{v}=(-1,0,0)$. Equation \eqref{k1} is
\begin{equation*} 
\left (-\frac{f'f''}{r(1-f'^2)^2} \right )^\alpha = \frac{1}{\sqrt{1-f'^2 }}.
\end{equation*}
 Using    $h=(1-f'^2 )^{-1/2}$ and integrating, we obtain  
\begin{equation}\label{fts2}
f'(r)=\left\{
\begin{array}{ll}
\pm\left(1-\frac{1}{m^2}e^{r^2}\right)^{1/2},  & \alpha =\frac{1}{2} \\
\pm\left(1-\left(m-\frac{2 \alpha -1}{2 \alpha }r^2\right)^{\frac{2 \alpha }{1-2 \alpha }}\right)^{1/2},& \alpha \neq \frac{1}{2}.%
\end{array}%
\right.
\end{equation}

\begin{theorem} \label{clas-5}
For any rotational  timelike $K^\alpha$-translator with spacelike axis and parametrized  by \eqref{p21}, the function $f$ is  
\begin{equation*}
f(r) =\left\{
\begin{array}{lll}
\pm \int^{r}\left(1-\frac{1}{m^2}e^{t^2}\right)^{1/2}\, dt, m>0,& &\alpha =\frac{1%
}{2} \\
\pm \int^{r}\left(1-\left(m-\frac{2 \alpha -1}{2 \alpha }t^2\right)^{\frac{2 \alpha }{1-2 \alpha }}\right)^{1/2}\, dt,m\in \r, & &\alpha \neq \frac{%
1}{2}.%
\end{array}%
\right.
\end{equation*}
The maximal  domain of the above function $f(r)$ is:
\begin{enumerate}
\item Case $\alpha=1/2$. The domain is $ (0,\sqrt{\log m^2})$.
\item Case $\alpha\in (0,1/2)$. The domain is $ (0,\sqrt{\frac{2\alpha}{1-2\alpha}(1-m)})$ if $ 0 \leq m <1$ and $(\sqrt{\frac{2\alpha}{2\alpha -1}m},\sqrt{\frac{2\alpha}{1-2\alpha}(1-m)} )$ if $m<0$.
\item  Case $\alpha\not\in [0,1/2]$. The domain is $(0,\sqrt{\frac{2\alpha}{1-2\alpha }(1-m)}) $, where $m>1$.
\end{enumerate}
\end{theorem}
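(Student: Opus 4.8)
We sketch how the proof of Theorem~\ref{clas-5} goes, following the pattern already used for Theorems~\ref{clas-1} and \ref{clas-2}.

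The plan is to specialize that scheme to the timelike regime $f'^2<1$ of the parametrization \eqref{p21}, with the speed normalized to $\vec v=(-1,0,0)$. Equation \eqref{k1} has already been reduced, via the substitution $h=(1-f'^2)^{-1/2}$, to the two branches \eqref{fts2} for $f'$, so the formula for $f$ in the statement follows from a single further quadrature, $f(r)=\pm\int^{r}(1-S(t))^{1/2}\,dt$; the additive constant of integration is immaterial, as it only translates the surface along the rotation axis. The substantive part is therefore the determination, for each value of $\alpha$, of the maximal interval on which the right-hand side of \eqref{fts2} is meaningful and defines a genuinely timelike surface.

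For that I would write $f'(r)^2=1-S(r)$, where $S(r)=\tfrac{1}{m^2}e^{r^2}$ when $\alpha=\tfrac12$ and $S(r)=\bigl(m-\tfrac{2\alpha-1}{2\alpha}r^2\bigr)^{\frac{2\alpha}{1-2\alpha}}$ when $\alpha\neq\tfrac12$. The domain is cut out by three conditions: $r>0$; that $S(r)$ be a well-defined real number, which for $\alpha\neq\tfrac12$ means the base $m-\tfrac{2\alpha-1}{2\alpha}r^2$ must be positive; and $0<S(r)\leq1$, where $S(r)>0$ encodes the timelike condition $f'^2<1$ and $S(r)\leq1$ encodes $f'^2\geq0$. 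For $\alpha=\tfrac12$ the last condition reads $e^{r^2}\leq m^2$, giving $(0,\sqrt{\log m^2})$ with $m>1$. For $\alpha\neq\tfrac12$ one tracks the signs of the coefficient $\tfrac{2\alpha-1}{2\alpha}$ and of the exponent $p=\tfrac{2\alpha}{1-2\alpha}$: if $\alpha\in(0,\tfrac12)$ then $p>0$ and $-\tfrac{2\alpha-1}{2\alpha}>0$, so $S$ is increasing, positivity of the base is automatic when $m\geq0$ but forces $r>\sqrt{\tfrac{2\alpha}{2\alpha-1}m}$ when $m<0$, while $S\leq1$ is equivalent to $r<\sqrt{\tfrac{2\alpha}{1-2\alpha}(1-m)}$ and additionally needs $m<1$; if $\alpha\notin[0,\tfrac12]$ then $p<0$ and $\tfrac{2\alpha-1}{2\alpha}>0$, so $S\leq1$ reverses to $m-\tfrac{2\alpha-1}{2\alpha}r^2\geq1$, which already implies positivity of the base and yields the single bounded interval $\bigl(0,\sqrt{\tfrac{2\alpha}{1-2\alpha}(1-m)}\bigr)$ with $m>1$, after invoking the identity $\tfrac{2\alpha}{1-2\alpha}(1-m)=\tfrac{2\alpha}{2\alpha-1}(m-1)$.

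The only genuine obstacle is this sign bookkeeping: because $p=\tfrac{2\alpha}{1-2\alpha}$ changes sign across $\alpha=\tfrac12$, the inequality $S(r)\leq1$ flips direction, and one must keep the base of the power strictly positive (otherwise $S(r)$ is not a real number), which is precisely what produces the left endpoint $\sqrt{\tfrac{2\alpha}{2\alpha-1}m}$ in the subcase $\alpha\in(0,\tfrac12)$, $m<0$. Once the cases are sorted, checking that the quadrature defining $f$ converges at each endpoint and identifying the boundary behaviour — $f'\to0$ with $f''\to\infty$ at the right endpoint, and $f'^2\to1$ at the left endpoint when $m<0$, where the induced metric degenerates and the surface can be continued across that point by a rotational spacelike $K^{\alpha}$-translator of Theorem~\ref{clas-2} with the same axis — is routine and parallels the discussion following Theorems~\ref{clas-1} and \ref{clas-2}.
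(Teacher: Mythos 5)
Your proposal is correct and follows essentially the same route the paper takes (and largely leaves implicit, since the authors only state that the proofs of Section 4 are adaptations of those in Section 3): integrate the reduced ODE via $h=(1-f'^2)^{-1/2}$ to get \eqref{fts2}, then determine the maximal domain by tracking the sign of $\tfrac{2\alpha-1}{2\alpha}$ and of the exponent $\tfrac{2\alpha}{1-2\alpha}$ subject to $0<f'^2<1$. Your case analysis, including the identity $\tfrac{2\alpha}{1-2\alpha}(1-m)=\tfrac{2\alpha}{2\alpha-1}(m-1)$ and the boundary behaviour at the two endpoints, matches the stated domains and the paper's subsequent remarks on extension across the degenerate set.
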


When $\alpha\in (0,1/2)$ and $m<0$, we have 
$$\lim_{r\to \sqrt{\frac{2\alpha}{2\alpha-1}m}}f'(r)^2=1.$$
This property connects with the analogous one of the   rotational $K^\alpha$-translators with the same axis of Theorem \ref{clas-2}, where the integration constant $m$ is positive. Therefore, each surface is the extension of the other one and the function $f(r)$ is defined in the interval $(0,\sqrt{\frac{2\alpha}{1-2\alpha}(1-m)})$. 

\begin{corollary} \label{exist-2}
There are not rotational  timelike $K^\alpha$-translators with spacelike axis intersecting orthogonally the rotation axis. If $\alpha\in (0,1/2)$, there are rotational timelike $K^{\alpha}$-translators with spacelike axis that meet the rotation axis at a conical point.  
\end{corollary}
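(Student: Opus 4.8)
The plan is to follow the pattern of Corollary~\ref{exist-12}, replacing the classification of Theorem~\ref{clas-2} by that of Theorem~\ref{clas-5}. A timelike surface of the form \eqref{p21} still satisfies $f'^2<1$ and is governed by the same auxiliary function $h=(1-f'^2)^{-1/2}$ used in the spacelike case, so the characterizations recalled after Corollary~\ref{exist-1} apply without change: the generating curve $r\mapsto(f(r),0,r)$ reaches the rotation axis exactly when $0$ lies in the closure of the maximal domain of $f$, the intersection is orthogonal if and only if $\lim_{r\to0}f'(r)=0$ (equivalently $\lim_{r\to0}h(r)=1$), and it occurs at a conical point if and only if $\lim_{r\to0}f'(r)^2=1$ (equivalently $\lim_{r\to0}h(r)=\infty$).

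First I would read off from Theorem~\ref{clas-5} for which values of $m$ the number $0$ is an endpoint of the domain: in the subcase $\alpha\in(0,1/2)$, $m<0$, the domain $(\sqrt{\tfrac{2\alpha}{2\alpha-1}m},\sqrt{\tfrac{2\alpha}{1-2\alpha}(1-m)})$ is bounded away from $0$, so those surfaces never meet the axis and may be discarded; in every remaining case the left endpoint is $0$. Then I would evaluate $\lim_{r\to0}f'(r)$ directly from \eqref{fts2}. For $\alpha=1/2$ one gets $\lim_{r\to0}f'(r)^2=1-m^{-2}$, which lies in $(0,1)$ because the domain $(0,\sqrt{\log m^2})$ forces $m^2>1$. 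For $\alpha\neq1/2$ one gets $\lim_{r\to0}f'(r)^2=1-m^{\,2\alpha/(1-2\alpha)}$; when $\alpha\in(0,1/2)$ the admissible range is $0\le m<1$ and the exponent is positive, so this limit equals $1$ precisely when $m=0$ and lies in $(0,1)$ otherwise, while when $\alpha\notin[0,1/2]$ the admissible range is $m>1$ and the exponent $2\alpha/(1-2\alpha)$ is negative, so the limit again lies in $(0,1)$. Hence $\lim_{r\to0}f'(r)$ is never $0$, which establishes the non-existence of orthogonal intersections, and it equals $\pm1$ only in the case $\alpha\in(0,1/2)$, $m=0$.

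To conclude the existence assertion I would check that the surface corresponding to $\alpha\in(0,1/2)$, $m=0$ is admissible and genuinely produces a conical point: $m=0$ lies in the allowed range, the domain is the bounded interval $(0,\sqrt{2\alpha/(1-2\alpha)})$, $K>0$ throughout since $K^\alpha=h>0$, and from \eqref{fts2} with $m=0$ one has $\lim_{r\to0}f'(r)^2=1$, so the generating curve reaches the axis at a conical point. I expect the only real difficulty to be organizational rather than analytic: correctly matching each admissible constant $m$ in Theorem~\ref{clas-5} with whether $r=0$ is an endpoint of its domain, and tracking the sign of the exponent $2\alpha/(1-2\alpha)$ and of the base $m-\tfrac{2\alpha-1}{2\alpha}r^2$ at $r=0$ across the three regimes $\alpha=1/2$, $\alpha\in(0,1/2)$, $\alpha\notin[0,1/2]$. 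Once this is tabulated, each limit is an elementary evaluation and no analytic obstruction remains.
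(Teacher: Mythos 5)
Your proposal is correct and follows essentially the same route as the paper: both reduce orthogonality to $\lim_{r\to 0}f'(r)=0$ and the conical condition to $\lim_{r\to 0}f'(r)^2=1$, evaluate the limit from \eqref{fts2}, observe that the value $m=1$ needed for orthogonality is excluded by the admissible ranges in Theorem~\ref{clas-5}, and identify $m=0$ with $\alpha\in(0,1/2)$ as the conical case. Your version is just slightly more explicit about discarding the $m<0$ subcase (whose domain misses $r=0$) and about the sign of the exponent $2\alpha/(1-2\alpha)$.
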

\begin{proof} The orthogonality condition is equivalent to $\lim_{r\to 0}f'(r)=0$ and the intersection is at a conical point if $\lim_{r\to 0}f'(r)^2=1$. From \eqref{fts2},
$$\lim_{r\to 0} f'(r)=\left\{
\begin{array}{ll}
\pm\left(1-\frac{1}{m^2}\right)^{1/2},  & \alpha =\frac{1}{2} \\
\pm\left(1- m^{\frac{2 \alpha }{1-2 \alpha }}\right)^{1/2},& \alpha \neq \frac{1}{2}.%
\end{array}%
\right.$$
Thus   the intersection is orthogonal if $m=1$, but this is not possible by Theorem \ref{clas-5}. The limit is $1$ only if $m=0$ and $\alpha\in (0,1/2)$, obtaining a intersection of conical type.
\end{proof}

\begin{corollary}
For rotational   timelike $K^{1/4}$-translators with spacelike axis parametrized by  \eqref{p21}, we have
$$f(r)= \pm \frac{1}{2} \left(r \sqrt{1-m-r^2}+(1-m) \tan^{-1} \left(\frac{r}{\sqrt{1-m-r^2}}\right)\right) ,$$
where $r \in (0,\sqrt{1-m})$ with $m <1$.
\end{corollary}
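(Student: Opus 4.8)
\textit{Proof proposal.} The plan is to specialize Theorem \ref{clas-5} to the exponent $\alpha=1/4$ and then perform the resulting elementary quadrature. First I would substitute $\alpha=1/4$ into the formula \eqref{fts2} for $f'(r)$ (equivalently, into the integrand of Theorem \ref{clas-5} in the case $\alpha\neq 1/2$). The key observation is that the two exponents collapse: $\frac{2\alpha-1}{2\alpha}=-1$ and $\frac{2\alpha}{1-2\alpha}=1$, so the bracketed term becomes $\left(m-\frac{2\alpha-1}{2\alpha}r^2\right)^{\frac{2\alpha}{1-2\alpha}}=m+r^2$. Hence $f'(r)=\pm\sqrt{1-m-r^2}$, which is exactly the simplification announced in Section \ref{spac}: for $\alpha=1/4$ the factors involving $1-f'^2$ disappear from the translator equation, so the integration can be carried out in closed form.

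Next I would integrate $f'(r)=\pm\sqrt{1-m-r^2}$ directly. Writing $a^2=1-m$ (legitimate since $m<1$), this is the standard antiderivative $\int\sqrt{a^2-r^2}\,dr=\tfrac12\bigl(r\sqrt{a^2-r^2}+a^2\arcsin(r/a)\bigr)$, and using the identity $\arcsin(r/a)=\arctan\!\bigl(r/\sqrt{a^2-r^2}\bigr)$ valid on the relevant range yields
$$f(r)=\pm\frac12\left(r\sqrt{1-m-r^2}+(1-m)\arctan\!\left(\frac{r}{\sqrt{1-m-r^2}}\right)\right),$$
up to an additive constant, which is irrelevant because $f$ is determined only up to a translation along the rotation axis. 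The $\pm$ in front is inherited directly from the $\pm$ in $f'$.

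Finally I would record the domain. Reality of $f'$ forces $r^2<1-m$, while the timelike condition $f'^2<1$ for the parametrization \eqref{p21} forces $m+r^2>0$; together with $r>0$ this gives the interval $(0,\sqrt{1-m})$ when $0\le m<1$ (and, for $m<0$, the extra constraint $r>\sqrt{-m}$, in agreement with case (2) of Theorem \ref{clas-5} at $\alpha=1/4$). There is essentially no obstacle here, since the whole point is that $\alpha=1/4$ trivializes the exponents; the only step needing a little care is matching the $\arctan$-form of the antiderivative with the $\arcsin$-form and checking the sign and branch on $(0,\sqrt{1-m})$.
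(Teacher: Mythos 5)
Your proposal is correct and follows exactly the route the paper intends (the paper omits the details, remarking only that $\alpha=1/4$ makes the exponents collapse so the quadrature is elementary): substitute $\alpha=1/4$ into \eqref{fts2} to get $f'=\pm\sqrt{1-m-r^2}$ and integrate. Your extra remark that for $m<0$ the timelike condition imposes $r>\sqrt{-m}$ is accurate and consistent with case (2) of Theorem \ref{clas-5}, and is in fact slightly more precise than the domain stated in the corollary itself.
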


Consider now  the parameterization \eqref{p22}. By Proposition \ref{p-timelike}, we can assume that $\vec{v}=(1,0,0)$. Then equation \eqref{k1} is
\begin{equation*} 
\left (\frac{f'f''}{r(1+f'^2)^2} \right )^\alpha = \frac{1}{\sqrt{1+f'^2 }}.
\end{equation*}
Using the function $h=(1+f'^2)^{-1/2}$, we obtain the expression of $f'$, namely, 
\begin{equation} \label{der-p22}
f'(r)=\left\{
\begin{array}{ll}
\pm\left(\frac{1}{m^2}e^{r^2} -1 \right)^{1/2},  & \alpha =\frac{1}{2} \\
\pm\left(\left(m-\frac{2 \alpha -1}{2 \alpha }r^2\right)^{\frac{2 \alpha }{1-2 \alpha }} -1 \right)^{1/2},& \alpha \neq \frac{1}{2}.%
\end{array}%
\right.
\end{equation}

\begin{theorem}
For any rotational  timelike $K^\alpha$-translator with spacelike axis and parametrized  by \eqref{p22}, the function $f$ is
\begin{equation*} 
f(r) =\left\{
\begin{array}{lll}
\pm \int^{r}\left(\frac{1}{m^2}e^{t^2} -1\right)^{1/2}\, dt, m>0,& &\alpha =\frac{1%
}{2} \\
\pm \int^{r}\left(\left(m-\frac{2 \alpha -1}{2 \alpha }t^2 \right)^{\frac{2 \alpha }{1-2 \alpha }} -1\right)^{1/2}\, dt,m\in \r, & &\alpha \neq \frac{%
1}{2}.%
\end{array}%
\right.
\end{equation*}
The maximal  domain of the above function $f(r)$ is:
\begin{enumerate}
\item Case $\alpha=1/2$. The domain is $(0,\infty )$ if $m \leq 1$ or $(\sqrt{\log m^2}, \infty)$ if $m>1$.
\item Case $\alpha\in (0,1/2)$. The domain is $ (0,\infty )$ if $m \geq 1$ or $ (\sqrt{\frac{2\alpha}{1-2\alpha}(1-m)}, \infty)$ if $m<1$.
\item  Case $\alpha\not\in [0,1/2]$. The domain is $(0,\sqrt{\frac{2\alpha}{2\alpha -1}m} ) $ if $0 <m \leq 1$ or $(\sqrt{\frac{2\alpha}{2\alpha -1}(m-1)}, \sqrt{\frac{2\alpha}{2\alpha -1}m}) $ if $m>1$.
\end{enumerate}
\end{theorem}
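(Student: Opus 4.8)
The plan is to follow exactly the pattern of the previous spacelike and timelike computations, adapting the signs dictated by the parametrization \eqref{p22}. First I would compute, for the surface $X(r,\theta)=(f(r),r\cosh\theta,r\sinh\theta)$, the first fundamental form coefficients (recall $EG-F^2=-r^2(1+f'^2)<0$, so the surface is always timelike), the unit normal $N$, and the Gauss curvature $K$. These give the stated equation $\left(\dfrac{f'f''}{r(1+f'^2)^2}\right)^{\alpha}=\dfrac{1}{\sqrt{1+f'^2}}$ after invoking Proposition \ref{p-timelike} to reduce $\vec v$ to $(1,0,0)$ (the sign choice being forced so that $\langle N,\vec v\rangle>0$). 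Setting $h=(1+f'^2)^{-1/2}$, one has $h'= -f'f''(1+f'^2)^{-3/2}$, so the equation becomes $(-hh'/r)^{\alpha}=h$, i.e. $h^{(\alpha-1)/\alpha}h'=-r$; integrating yields the two cases $h=me^{-r^2/2}$ for $\alpha=1/2$ and $h=(m-\frac{2\alpha-1}{2\alpha}r^2)^{\alpha/(2\alpha-1)}$ for $\alpha\neq 1/2$, and then $f'^2=1/h^2-1$ gives precisely \eqref{der-p22}. One antiderivation produces the formula for $f(r)$ in the statement.

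The substantive part is the determination of the maximal domain in each of the three regimes, which is a matter of tracking two constraints simultaneously: $f'$ must be real, i.e. $1/h^2\geq 1$, equivalently $h\leq 1$; and $r>0$. For $\alpha=1/2$, $h=me^{-r^2/2}$ is decreasing in $r$, so $h\le 1$ holds automatically once $r$ is large enough: if $m\le 1$ then $h\le m\le 1$ for all $r>0$ and the domain is $(0,\infty)$, while if $m>1$ the condition $me^{-r^2/2}\le 1$ gives $r>\sqrt{2\log m}=\sqrt{\log m^2}$, hence $(\sqrt{\log m^2},\infty)$. For $\alpha\in(0,1/2)$ the exponent $\alpha/(2\alpha-1)$ is negative and $2\alpha-1<0$, so $m-\frac{2\alpha-1}{2\alpha}r^2 = m+\frac{1-2\alpha}{2\alpha}r^2$ is increasing; $h\le 1$ becomes $m+\frac{1-2\alpha}{2\alpha}r^2\ge 1$ (base raised to a negative power), giving $r^2\ge \frac{2\alpha}{1-2\alpha}(1-m)$: if $m\ge1$ this is vacuous and we get $(0,\infty)$, if $m<1$ we get $(\sqrt{\frac{2\alpha}{1-2\alpha}(1-m)},\infty)$. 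For $\alpha\notin[0,1/2]$ the base $m-\frac{2\alpha-1}{2\alpha}r^2$ must itself stay positive (it is being raised to a non-integer power) and the exponent $\frac{2\alpha}{1-2\alpha}$ is now negative with $\frac{2\alpha-1}{2\alpha}>0$, so the base decreases from $m$ (at $r=0$) to $0$ at $r=\sqrt{\frac{2\alpha}{2\alpha-1}m}$, forcing $m>0$ and an upper endpoint there; the lower endpoint comes from $h\le1$, i.e. base $\ge 1$, i.e. $r^2\le\frac{2\alpha}{2\alpha-1}(m-1)$, which is an active constraint only when $m>1$, yielding $(\sqrt{\frac{2\alpha}{2\alpha-1}(m-1)},\sqrt{\frac{2\alpha}{2\alpha-1}m})$, and otherwise (for $0<m\le1$) $(0,\sqrt{\frac{2\alpha}{2\alpha-1}m})$.

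The main obstacle, such as it is, is purely bookkeeping: for each of the three $\alpha$-regimes one must correctly read off the sign of the exponent $\frac{2\alpha}{1-2\alpha}$ and of the coefficient $\frac{2\alpha-1}{2\alpha}$, decide whether the positivity of the base is an independent constraint (it is precisely when $\frac{2\alpha}{1-2\alpha}$ is not a nonnegative integer, hence in particular for $\alpha\notin[0,1/2]$ and for $\alpha\in(0,1/2)$), translate the inequality $h\le1$ correctly through a monotone-but-possibly-decreasing power, and then intersect the resulting $r$-intervals with $(0,\infty)$. Since all the building blocks — the expression of $K$, the normal $N$, and Proposition \ref{p-timelike} — are already established, and since the integration is elementary, no genuinely new idea is required beyond this careful case analysis; the details being entirely analogous to Theorems \ref{clas-2} and \ref{clas-5}, they may be omitted in the same spirit as announced at the start of Section \ref{time}.
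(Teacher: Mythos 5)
Your proposal is correct and follows exactly the route the paper intends (the paper omits the details, deferring to the computations of Section \ref{spac}): the reduction via $h=(1+f'^2)^{-1/2}$ to $h^{(\alpha-1)/\alpha}h'=-r$, the integration, the recovery of $f'$ through $f'^2=1/h^2-1$, and all three domain determinations agree with the statement. The only blemish is a self-contained sign slip in case $\alpha\notin[0,1/2]$: there the exponent $\alpha/(2\alpha-1)$ is \emph{positive}, so $h\le 1$ is equivalent to the base being $\le 1$, i.e. $r^2\ge \frac{2\alpha}{2\alpha-1}(m-1)$ --- not ``base $\ge 1$, i.e. $r^2\le\frac{2\alpha}{2\alpha-1}(m-1)$'' as you wrote --- and it is this corrected inequality that produces the lower endpoint $\sqrt{\frac{2\alpha}{2\alpha-1}(m-1)}$ you (correctly) state.
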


\begin{corollary} For a rotational timelike   $K^{1/4}$-translators with    spacelike axis parametrized by  \eqref{p22}, we have 
$$f(r)= \pm \frac{1}{2} \left(r \sqrt{-1+m+r^2}+(-1+m) \log \left(r+{\sqrt{-1+m+r^2}}\right)\right) ,$$
where $r \in (\sqrt{1-m}, \infty)$ with $m \leq 1$. If $m=1$ then $r \in (0, \infty)$  and the generating curve is the parabola $r \mapsto (\pm \frac{1}{2} r^2,r,0)$.
\end{corollary}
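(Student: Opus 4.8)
The plan is to specialize the general formula \eqref{der-p22} for $f'$ to the value $\alpha=1/4$ and then to perform the resulting elementary quadrature. Setting $\alpha=1/4$ one has $\frac{2\alpha-1}{2\alpha}=-1$ and $\frac{2\alpha}{1-2\alpha}=1$, so the second branch of \eqref{der-p22} loses its transcendental exponent and collapses to
\[
f'(r)=\pm\bigl(m+r^2-1\bigr)^{1/2}.
\]
This is precisely why $\alpha=1/4$ is once more the distinguished case: the factor $1+f'^2$ drops out of the exponent, so an explicit primitive is available.

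Next I would integrate. Writing $c=m-1$ and using the classical antiderivative
\[
\int\sqrt{r^2+c}\,dr=\tfrac12\left(r\sqrt{r^2+c}+c\,\log\!\bigl(r+\sqrt{r^2+c}\bigr)\right),
\]
which is checked at once by differentiation, one obtains $f$ up to an additive constant. Since the rotation axis is the spacelike $x$-axis and, in the parametrization \eqref{p22}, $f(r)$ is exactly the $x$-coordinate of the generating curve, such a constant corresponds to a translation along the axis and may be discarded after a rigid motion of $\r_1^3$. With $c=m-1=-1+m$ this yields the asserted expression for $f(r)$, the two signs coming from the sign in $f'$.

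Then I would pin down the domain. The radicand $m+r^2-1$ is nonnegative if and only if $r^2\ge 1-m$; for $m<1$ this forces $r>\sqrt{1-m}$, which coincides with the left endpoint $\sqrt{\tfrac{2\alpha}{1-2\alpha}(1-m)}$ of the maximal interval supplied by the preceding theorem when $\alpha=1/4$. For $m=1$ one gets $f'(r)=\pm r$, hence $f(r)=\pm\tfrac12 r^2$ after dropping the constant, the domain is all of $(0,\infty)$, and the generating curve $r\mapsto(f(r),r,0)=(\pm\tfrac12 r^2,r,0)$ is a parabola in the $xy$-plane; this also agrees with the general formula in the limit $m\to 1$, since $r\sqrt{r^2}=r^2$ for $r>0$ and the logarithmic term then has coefficient $0$.

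I do not expect a genuine obstacle here: the argument is a one-line simplification of two exponents followed by a textbook integral. The only points deserving a word of care are the normalization of the integration constant, dealt with by a rigid motion along the axis, and the matching of the left endpoint $r\to\sqrt{1-m}^{+}$ — where $f'\to 0$ and $f''\to\infty$ — with the maximal-domain statement already established for general $\alpha$; for this I would simply invoke that theorem rather than redo the endpoint analysis.
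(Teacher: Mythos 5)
Your proposal is correct and follows exactly the route the paper intends (the paper omits the details): setting $\alpha=1/4$ in \eqref{der-p22} collapses the exponents to give $f'(r)=\pm(m+r^2-1)^{1/2}$, and the standard antiderivative of $\sqrt{r^2+c}$ with $c=m-1$ yields the stated formula, with the domain read off from the nonnegativity of the radicand and the preceding theorem. The verification of the $m=1$ parabola and the matching of the left endpoint $\sqrt{1-m}$ with $\sqrt{\tfrac{2\alpha}{1-2\alpha}(1-m)}$ are both accurate.
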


In the following we see when the intersection with the rotation axis is orthogonal.  
\begin{corollary} \label{exist-3}
 For any $\alpha$, there are rotational  timelike $K^{\alpha}$-translators with spacelike axis parametrized by \eqref{p22} which intersect orthogonally  the rotation axis.  
\end{corollary}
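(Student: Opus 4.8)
The plan is to read off the answer from the explicit formula \eqref{der-p22} for $f'$ together with the domain analysis in the classification above for the parametrization \eqref{p22}, following the pattern of Corollaries \ref{exist-12} and \ref{exist-2}. For the surface \eqref{p22} the generating curve $r\mapsto(f(r),r,0)$ reaches the rotation axis $L=\mbox{sp}\{e_1\}$ as $r\to 0$ whenever $f$ has a finite limit there, and the intersection is orthogonal precisely when $\lim_{r\to 0}f'(r)=0$; indeed in that case the tangent vector $(f'(0),1,0)$ of the profile curve equals $e_2$, which is orthogonal to the axis direction $e_1$.

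First I would compute $\lim_{r\to 0}f'(r)$ from \eqref{der-p22}, obtaining $\lim_{r\to 0}f'(r)=\pm\bigl(1/m^2-1\bigr)^{1/2}$ when $\alpha=1/2$ and $\lim_{r\to 0}f'(r)=\pm\bigl(m^{2\alpha/(1-2\alpha)}-1\bigr)^{1/2}$ when $\alpha\neq1/2$. In either case this limit vanishes if and only if $m=1$, so the only candidates are the solutions with $m=1$.

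It then remains to check that $m=1$ is admissible in each of the three regimes of the classification. From the domain table, for $m=1$ the maximal domain of $f$ is $(0,\infty)$ when $\alpha\in(0,1/2]$ and $(0,\sqrt{2\alpha/(2\alpha-1)})$ when $\alpha\notin[0,1/2]$ --- in every case a nonempty interval with left endpoint $0$. Moreover the surface \eqref{p22} is automatically timelike, since $EG-F^2=-r^2(1+f'^2)<0$, and a local expansion of \eqref{der-p22} at $r=0$ gives $f'(r)^2\sim r^2$, so $f$ and $f''$ remain bounded near $r=0$ and the profile curve is regular up to the point $(f(0),0,0)\in L$, which it meets orthogonally. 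This produces, for every $\alpha\neq 0$, the desired rotational timelike $K^\alpha$-translator; a concrete instance is obtained by setting $m=1$ in the preceding corollary on $K^{1/4}$-translators, where $f(r)=\pm\frac{1}{2} r^2$ and the generating curve is the parabola $r\mapsto(\pm\frac{1}{2} r^2,r,0)$.

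There is no real obstacle here, since the substantive work is already contained in the derivation of \eqref{der-p22} and the domain table. The only point requiring attention is confirming that $m=1$ lies in the allowed parameter range in all three subcases --- in sharp contrast to the parametrization \eqref{p21}, for which $m=1$ is forbidden by Theorem \ref{clas-5} and hence no orthogonal intersection occurs (Corollary \ref{exist-2}) --- together with the routine verification that $f'(r)\approx\pm r$ near the axis, which guarantees that the surface is genuinely regular there.
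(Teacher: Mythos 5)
Your proposal is correct and follows essentially the same route as the paper: both identify $m=1$ as the unique value making $\lim_{r\to 0}f'(r)=0$ in \eqref{der-p22} and then confirm from the domain table that $m=1$ is admissible in every regime of $\alpha$, with domain $(0,\infty)$ for $\alpha\in(0,1/2]$ and $(0,\sqrt{2\alpha/(2\alpha-1)})$ for $\alpha\notin[0,1/2]$. Your additional check that $f'(r)^2\sim r^2$ near $r=0$ (so the profile curve is regular at the axis) is a harmless, correct refinement of the paper's terser argument.
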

\begin{proof}
If $\alpha \in (0,1/2]$, it is necessary to be $m=1$ in \eqref{der-p22} where the maximal domain is $(0,\infty)$. The same occurs for $\alpha \notin [0,1/2]$ and $m=1$ again and the domain is  $(0,\sqrt{\frac{2\alpha}{2\alpha -1}})$. The limits follows from  \eqref{der-p22}.
\end{proof}

\subsection{The axis is lightlike}

 We assume that  the rotation axis is $L=\mbox{sp}\{ e_2 +e_3 \}$, $\vec{v}=(0,-1,-1)$ and the parametrization is \eqref{p3}  where $f'<0$. Equation  \eqref{k1} is
\begin{equation*} 
\frac{f''}{(-f')^{\frac{4\alpha-1}{2\alpha}}}=8r.
\end{equation*}
Integrating,
\begin{equation*}
f'(r)=\left\{
\begin{array}{ll}
 -me^{-4r^2},m>0,  & \alpha =\frac{1}{2} \\
-\left (\frac{2(2\alpha-1)}{\alpha}r^2+m \right )^{\frac{2\alpha}{1-2\alpha}}, m\in\r ,& \alpha \neq \frac{1}{2}.%
\end{array}%
\right.
\end{equation*}%

\begin{theorem}\label{clas-7}  Any rotational timelike $K^\alpha$-translator   with lightlike axis parametrizes as \eqref{p3}, where 
\begin{equation*}
f(r) =\left\{
\begin{array}{lll}
 -m\int^{r} e^{-4t^2} dt, m>0,& &\alpha =\frac{1%
}{2} \\
- \int^{r} \left (\frac{2(2\alpha-1)}{\alpha}t^2+m \right )^{\frac{2\alpha}{1-2\alpha}} dt,m\in \r, & &\alpha \neq \frac{%
1}{2}.%
\end{array}%
\right.
\end{equation*}
The maximal  domain of the above function $f(r)$ is: 
\begin{enumerate}
\item Case $\alpha=1/2$. The domain is $(0, \infty)$.
\item Case $\alpha \in (0,1/2)$. The domain is $(0, \sqrt{\frac{\alpha}{2(1-2\alpha )}m} )$ and $m>0$.
\item Case $\alpha \not\in [0,1/2]$. The domain is $(0,\infty)$ if $m\geq 0$ and $(\sqrt{\frac{\alpha}{2(1-2\alpha )}m}, \infty)$ if $m<0$.
\end{enumerate}
\end{theorem}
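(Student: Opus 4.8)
The plan is to mirror, step for step, the treatment of the spacelike lightlike case carried out in Proposition~\ref{p-light} and Theorem~\ref{clas-3}, changing only the signs forced by the timelike condition $f'<0$ (equivalently $EG-F^2=16r^2f'<0$ for the parametrization \eqref{p3}). First I would invoke Proposition~\ref{p-timelike} to reduce to a speed parallel to the axis. Since for a timelike surface the normal satisfies $\langle N,N\rangle=1$, the computation of Proposition~\ref{p-light} goes through with $\sqrt{f'}$ replaced by $\sqrt{-f'}$, giving
\[
K=\frac{f''}{8rf'^2},\qquad N=\frac{1}{2\sqrt{-f'}}\bigl(-2t,\,-1+t^2+f',\,1+t^2+f'\bigr),
\]
so that $\langle N,\vec v\rangle=-v_2/\sqrt{-f'}$ when $\vec v=(0,v_2,v_2)$; positivity of the left side of \eqref{k1} forces $v_2<0$, and after a homothety we may take $\vec v=(0,-1,-1)$. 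Substituting into \eqref{k1} and clearing the $\alpha$-th power then collapses the equation to the ODE $f''=8r(-f')^{\frac{4\alpha-1}{2\alpha}}$ displayed just before the statement.

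The second step is a double integration. Setting $g=-f'>0$, the equation reads $g'\,g^{-\frac{4\alpha-1}{2\alpha}}=-8r$. A first integration separates $\alpha=1/2$, where the left exponent equals $-1$ and one gets $\log g=-4r^2+\mathrm{const}$, hence $f'=-me^{-4r^2}$ with $m>0$, from $\alpha\neq1/2$, where one obtains $g^{\frac{1-2\alpha}{2\alpha}}=\frac{2(2\alpha-1)}{\alpha}r^2+m$ and therefore the stated formula for $f'$. A second integration recovers $f$ up to an additive constant, yielding the two displayed integral expressions. Note that, unlike in the cases of timelike or spacelike axis where the auxiliary function encodes $f'^2$, here one solves directly for $f'$, so no $\pm$ ambiguity arises and $f'=-g<0$ automatically, as required for a timelike surface.

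Finally I would read off the maximal domain by asking for which $r>0$ the formula for $f'$ is real and negative. For $\alpha=1/2$ this always holds, so the domain is $(0,\infty)$. For $\alpha\neq1/2$ the only constraint is that the base $\frac{2(2\alpha-1)}{\alpha}r^2+m$ be positive, since then its power is a well-defined positive number and $f'=-g<0$; a case split on the sign of the coefficient $\frac{2(2\alpha-1)}{\alpha}$, which is negative precisely for $\alpha\in(0,1/2)$ and positive for $\alpha\notin[0,1/2]$, together with the sign of $m$, produces exactly the three intervals of the statement. I do not expect a genuine obstacle: the proof is as routine as the paper indicates, and the only points requiring care are the bookkeeping ones, namely the replacement $\sqrt{f'}\mapsto\sqrt{-f'}$ coming from $\langle N,N\rangle=1$, the attendant sign change $\tfrac{2(1-2\alpha)}{\alpha}\mapsto\tfrac{2(2\alpha-1)}{\alpha}$ relative to Theorem~\ref{clas-3}, and the elementary endpoint analysis.
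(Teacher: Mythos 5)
Your proposal is correct and takes essentially the same route the paper intends: the paper explicitly omits the details of the timelike case, saying only that the spacelike lightlike-axis computation (Proposition~\ref{p-light}, Theorem~\ref{clas-3}) adapts under $f'<0$, and you supply exactly that adaptation --- the replacement $\sqrt{f'}\mapsto\sqrt{-f'}$ in $N$, the normalization $\vec v=(0,-1,-1)$, the ODE $f''=8r(-f')^{\frac{4\alpha-1}{2\alpha}}$, its integration via $g=-f'$, and the domain analysis from positivity of the base $\frac{2(2\alpha-1)}{\alpha}r^2+m$. No gaps.
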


 Following with the problem of extension of rotational $K^\alpha$-translators along the limit set of lightlike points, consider the case $\alpha\in (0,1/2)$ and $m>0$ of Theorem \ref{clas-7}. If $r_0=\sqrt{\frac{\alpha}{2(1-2\alpha )}m}$, we have $\lim_{r\to  r_0}f'(r)=0$. This implies that at $r_0$ the surface leaves to be timelike and the metric is degenerated. From Theorem \ref{clas-3} and in the case $\alpha\in (0,1/2)$, $m<0$, the generating curve of the corresponding rotational $K^\alpha$-translator is defined in the interval $( \sqrt{\frac{\alpha}{2(2\alpha-1 )}m},\infty)$. Thus the rotational timelike $K^\alpha$-translator can be extended on the right of $r_0$ by means of a  rotational $K^\alpha$-translator with the same axis. In this situation, the function $f(r)$ is defined in $(0,\infty)$.

\begin{corollary} \label{c-light}
Rotational   timelike $K^{1/4}$-translators with    lightlike axis parametrize as  \eqref{p3}, where
$f(r)=\frac{4}{3}r^3-mr$, $m>0$ and   $r\in (0,\frac{\sqrt{m}}{2})$.
\end{corollary}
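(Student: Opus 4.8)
The plan is to specialize the general classification of Theorem \ref{clas-7} to the value $\alpha=1/4$ and carry out the single integration explicitly. First I would substitute $\alpha=1/4$ into the formula for $f'(r)$ in the case $\alpha\neq 1/2$. The exponent $\frac{2\alpha}{1-2\alpha}$ becomes $\frac{1/2}{1/2}=1$, and the coefficient $\frac{2(2\alpha-1)}{\alpha}$ becomes $\frac{2(-1/2)}{1/4}=-4$. Thus $f'(r)=-(-4r^2+m)=4r^2-m$, which is elementary to antidifferentiate: integrating gives $f(r)=\frac{4}{3}r^3-mr$ up to an additive constant, which we discard since only the generating curve up to a vertical translation matters.

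Next I would determine the domain. From Theorem \ref{clas-7}, case $\alpha\in(0,1/2)$, the maximal domain is $(0,\sqrt{\frac{\alpha}{2(1-2\alpha)}m})$ with $m>0$; plugging in $\alpha=1/4$ gives $\frac{\alpha}{2(1-2\alpha)}=\frac{1/4}{2\cdot 1/2}=\frac14$, so the right endpoint is $\sqrt{m}/2$, yielding $r\in(0,\sqrt{m}/2)$ as claimed. One should also double-check consistency with the defining equation: the timelike condition for \eqref{p3} is $f'<0$, and indeed $f'(r)=4r^2-m<0$ precisely on $(0,\sqrt{m}/2)$, while at $r_0=\sqrt{m}/2$ one has $\lim_{r\to r_0}f'(r)=0$, matching the observation preceding the corollary that the surface ceases to be timelike there.

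Since the corollary is a direct specialization, I do not anticipate a genuine obstacle; the only point requiring mild care is bookkeeping of the constants, in particular tracking the sign in $\frac{2(2\alpha-1)}{\alpha}$ (note this differs from the spacelike case in Theorem \ref{clas-3}, where the coefficient is $\frac{2(1-2\alpha)}{\alpha}$), and confirming that the arbitrary constant of integration may be normalized to zero. One could alternatively verify the result a posteriori by computing $K$ directly from \eqref{p3} with $f(r)=\frac{4}{3}r^3-mr$ using the coordinate formula for $K$ in Section \ref{sec2}, checking that $K^{1/4}=1/\sqrt{-f'}=1/\sqrt{m-4r^2}$ holds identically; this independent check requires only the routine computation $f''=8r$ and $K=\frac{f''}{8rf'^2}$ from Proposition \ref{p-light}, now read with $f'<0$.
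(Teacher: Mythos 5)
Your proposal is correct and follows exactly the route the paper intends: specialize the formula for $f'$ in the timelike, lightlike-axis case to $\alpha=1/4$ (where the exponent becomes $1$ and the coefficient $-4$, giving $f'=4r^2-m$), integrate, and read off the domain from Theorem \ref{clas-7}(2). The constants, the sign bookkeeping, and the consistency check $f''=8r$ with $f'<0$ on $(0,\sqrt{m}/2)$ all check out.
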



\section*{Acknowledgements}
Rafael L\'opez  is a member of the Institut of Mathematics  of the University of Granada. This work  has been partially supported by  the Projects  I+D+i PID2020-117868GB-I00, supported by MCIN/ AEI/10.13039/501100011033/,  A-FQM-139-UGR18 and P18-FR-4049.



\begin{thebibliography}{99}
\bibitem{an} B. Andrews, Contraction of convex hypersurfaces in Euclidean space, {\it Calc. Var. Partial Differ. Eq.} {\bf2}  (1994), 151--171.

\bibitem{an2} B. Andrews, X. Chen, H. Fang and J. McCoy, Expansion of co-compact convex spacelike hypersurfaces in Minkowski space by their curvature, {\it Indiana Univ. Math. J.} {\bf 64} (2015),  635--662. 

\bibitem{al} M. E Aydin, R. L\'opez, Translators of the Gauss curvature flow, preprint 2022.

\bibitem{bs} P. Bayard and O. C. Schn\"{u}rer, Entire spacelike hypersurfaces of constant Gauss curvature in Minkowski space,  {\it J. Reine Angew. Math.} {\bf 627} (2009), 1--29.

 \bibitem{bjs}  B. Guan, H. Jian and R. Schoen,  Entire spacelike hypersurfaces of prescribed Gauss curvature in Minkowski space, 
{\it J. Reine Angew. Math.} {\bf 595} (2006), 167--188. 

\bibitem{ch} B. Chow, Deforming convex hypersurfaces by the nth root of the Gaussian curvature, {\it J. Differential Geom.} {\bf 22}   (1985), 117--138.

\bibitem{gm} Y. Gao and J. Mao, Inverse Gauss curvature flow in a time cone of Lorentz-Minkowski space $\r_1^{n+1}$, arXiv:2108.08686 [math.DG].

\bibitem{hjs}   Y. Huang, H. Jian and   N. Su,  Spacelike hypersurfaces of prescribed Gauss-Kronecker curvature in exterior domains, {\it Acta Math. Sin. (Engl. Ser.)} {\bf 25} (2009),  491--502. 

\bibitem{ju}  H. Ju, Exterior Dirichlet problem for translating solutions of Gauss curvature flow in Minkowski space, {\it Abstr. Appl. Anal.} 2014, Art. ID 671537, 7 pp. 

\bibitem{ur}   J. Urbas,    An expansion of convex hypersurfaces, {\it J. Differential Geom.} {\bf 33} (1991), 91--125.


\bibitem{ur1} J. Urbas, Complete noncompact self-similar solutions of Gauss curvature flows. I. Positive powers, {\it Math. Ann.} {\bf 311} (1998),   251--274.


\bibitem{ur3} J. Urbas,   Complete noncompact self-similar solutions of Gauss curvature flows. II. Negative powers, {\it Adv. Diff. Eq.} {\bf 4} (1999), 323--346. 

 \end{thebibliography}
 \end{document}